\tikzset{%
    symbol/.style={%
        draw=none,
        every to/.append style={%
            edge node={node [sloped, allow upside down, auto=false]{$#1$}}}
    }
}
\def \C{\textup{C}}
\def \CC{\textup{\r{C}}}
\def \Z{\mathbb{Z}}
\def \R{\mathbb{R}}
\def \Q{\mathbb{Q}}
\def \F{\mathcal{F}}
\def \O{\mathcal{O}}
\def \W{\mathcal{W}}
\def \X{\mathcal{X}}
\def \I{\mathcal{I}}
\def \J{\mathcal{J}}
\def \k{{\bf k}}
\def \B{\mathbb{B}}
\def\v{\mathfrak{v}}
\def\w{\mathfrak{w}}
\def \Trop{\operatorname{Trop}}
\def \In{\operatorname{in}}
\def \ord{\operatorname{ord}}
\def \Proj{\operatorname{Proj}}
\def \gr{\operatorname{gr}}
\def \Spec{\operatorname{Spec}}
\def \GL{\operatorname{GL}}
\def \M{\operatorname{M}}
\def \add{\oplus}
\def \ADD{\bigoplus}
\def \scale{\odot}
\def\bx{{\bf x}}
\theoremstyle{plain}
\newtheorem{theorem}{Theorem}[section]
\newtheorem{lemma}[theorem]{Lemma}
\newtheorem{proposition}[theorem]{Proposition}
\newtheorem{corollary}[theorem]{Corollary}
\theoremstyle{definition}
\newtheorem{example}[theorem]{Example}
\newtheorem{definition}[theorem]{Definition}
\newtheorem{remark}[theorem]{Remark}
\begin{document}
\title{Generic tropical initial ideals of Cohen-Macaulay algebras}

\author{Kiumars Kaveh}
\address{Department of Mathematics, University of Pittsburgh,
Pittsburgh, PA, USA}
\email{kaveh@pitt.edu}

\author{Christopher Manon}
\address{Department of Mathematics, University of Kentucky, Lexington, KY, USA}
\email{Christopher.Manon@uky.edu}

\author{Takuya Murata}
\address{Department of Mathematics, University of Pittsburgh,
Pittsburgh, PA, USA}
\email{takusi@gmail.com}

\date{\today}

\thanks{The first author is partially supported by a National Science Foundation Grant (Grant ID: DMS-1601303) and Simons Collaboration Grant for Mathematicians.}

\thanks{The second author is partially supported by a Simons Collaboration Grant for Mathematicians.}

\subjclass[2010]{14T05, 14M05, 13A18}

\begin{abstract}
We study the generic tropical initial ideals of a positively graded Cohen-Macaulay algebra $R$ over an algebraically closed field $\k$.   Building on work of R\"omer and Schmitz, we give a formula for each initial ideal, and we express the associated quasivaluations in terms of certain $I$-adic filtrations.  As a corollary, we show that in the case that $R$ is a domain, every initial ideal coming from the codimension $1$ skeleton of the tropical variety is prime, so ``generic presentations of Cohen-Macaulay domains are well-poised in codimension $1$."   
\end{abstract}

\date{\today}
\maketitle

\tableofcontents

\section{Introduction}
Let $I$ be a homogeneous prime ideal in a polynomial  ring $\k[{\bf x}]:=\k[x_1, \ldots, x_n]$, where $\k$ is a field. Given a vector $w \in \R^n$, it is well-known in Gr\"obner theory that the initial ideal $\In_w(I)$ can be considered as a deformation of $I$. Geometrically, one has a flat family over the affine line, or a \emph{flat degeneration}, whose generic fiber is the scheme $X=\Proj(\k[{\bf x}] / I)$ and the special fiber (fiber over $0$) is the scheme $X_0 = \Proj(\k[{\bf x}] / \In_w(I))$ (see \cite[Section 15.8]{E}). The case when $\In_w(I)$ is prime is particularly interesting. In this case all the fibers of the family are reduced and irreducible. If we moreover assume that $w$ comes from relative interior of a maximal cone in the tropical variety of $I$ then $X_0$ is in fact a toric variety and one obtains a \emph{toric degeneration} of the projective variety $X$ (see \cite{Kaveh-Manon-NOK}). This is a very special and desirable situation and does not usually happen. 

In this paper we study the initial ideals of the \emph{generic tropical variety} of a homogeneous ideal $I$ in a polynomial ring. We show that when the ring is Cohen-Macaulay and the initial ideal comes from \emph{any} codimension $1$ cone in the generic tropical variety then the corresponding initial ideal is prime. Hence, in this generic situation, one has many ``almost toric'' degenerations of the projective variety $X$. This work can be considered as a follow-up to \cite{KMM}.

Let $R$ be a positively graded algebra over an algebraically closed field $\k$, and let $X = \Proj(R)$.   The theory of Newton-Okounkov bodies \cite{KK, LM} employs a $\Z^n$-valued valuation $\v$ on $R$, where $n=\dim(X)$, to construct a convex body $\Delta(R, \v)$ which plays the role for $X$ much like the Newton polytope (also called the  moment polytope) of a toric variety.  In general, the Newton-Okounkov body $\Delta(R, \v)$ can be poorly behaved.  It can be the case that $\Delta(R, \v)$ is not a polytope, and moreover, the value semigroup $S(R, \v) := \v(R \setminus \{0\})$, can fail to be finitely generated.  However, in the case that $S(R, \v)$ is finitely generated all is well: $\Delta(R, \v)$ is a polytope, and a theorem of Anderson \cite{Anderson} states that there is a toric degeneration of $X$ to a toric variety whose normalization is the projective toric variety associated to $\Delta(R, \v)$. In general, it is difficult to check the finite generation of $S(R, \v)$ and generically it is not expected to happen (see for example \cite{Anderson, HK, Ilten-Wrobel, Bossinger-toric-ideal}).

There has been much recent interest in finding toric degenerations of algebraic varieties, especially in the case where the variety has some interaction with representation theory (see \cite{Fang-Fourier-Littelmann} and references therein), where the combinatorics of $\Delta(R, \v)$ can be used to address counting problems. In \cite{Kaveh-Manon-NOK}, the first two authors show that toric degenerations can be detected and constructed using features of the \emph{tropical geometry} of $X$.  In particular, valuations which give rise to toric degeneration correspond to \emph{prime cones} in the tropical variety of $X$.  In recent work \cite{Escobar-Harada}, Escobar and Harada show that the Newton-Okounkov bodies associated to prime cones which share a face carry piecewise-linear \emph{mutation maps} between them. 

The ideal situation therefore is when all cones in the tropical variety are prime, in this case one says the variety is \emph{well-poised}.  The well-poised property was studied by Ilten and the second author \cite{Ilten-Manon} in the case of so-called rational $T$-varieties of complexity $1$. Such varieties carry a torus action of dimension one less than the dimension of the variety, and can therefore be thought of as being very close to toric varieties. 

Finally, in recent work \cite{KMM} the authors show that \emph{any} projective variety can be  degenerated to a (possible non-normal) $T$-variety of complexity $1$ using Bertini type techniques.  In this paper we combine the approaches of \cite{KMM} and \cite{Kaveh-Manon-NOK} to draw conclusions about the generic tropical variety of a Cohen-Macaulay variety.  In particular, building on work of R\"omer and Schmitz \cite{RS1, RS2}, the main result of the present paper  shows that every cone of codimension one in the generic tropical variety is prime, thus showing that such varieties are almost well-poised. An example of this situation is the Pl\"ucker algebra of the Grassmannian of 
$2$-planes. This is in fact an example of a well-poised variety. 

We begin by reviewing some background material. Let $\k$ be an algebraically closed field and let $\k[\bx] = \k[x_1, \ldots, x_n]$ be the polynomial ring in $n$ indeterminates. Take $f(\bx) = \sum_\alpha c_\alpha \bx^\alpha \in \k[\bx]$. Recall that for $w=(w_1, \ldots, w_n) \in \R^n$, the initial form $\In_w(f)$ is the polynomial $\sum_\beta c_\beta \bx^\beta$ where the sum is over all $\beta$ such that the inner product $\langle w, \beta \rangle$ is minimum. For an ideal $J \subset \k[\bx]$, $\In_w(J)$ is the ideal generated by $\In_w(f)$, for all $f \in J$. We also recall that the the tropical variety $\Trop(J) \subset \R^n$ is defined as:
$$\Trop(J) = \{ w \in \R^n \mid \In_w(J) \textup{ contains no monomials}\}.$$
It is well-known that $\Trop(J)$ is the support of a polyhedral fan (see \cite[Theorem 3.3.5]{MSt}). 

Fix a homogeneous ideal $J \subset \k[\bx]$ and consider the ideal $I = g\circ J$ obtained by making a coordinate transformation $g \in \GL_n(\k)$.   In \cite{RS1} it is shown that for $g$ in a nonempty Zariski open, $\Trop(I)$ is independent of the choice of $g$. Hence, $\Trop(I)$ for $g$ generic is known as the \emph{generic tropical variety} of $J$. In this paper, when $R = \k[\bx] / J$ is Cohen-Macaulay, we describe the initial ideals $\In_w(I)$ for $w$ in the the generic tropical variety $\Trop(I)$. 

First, let us consider the case of a principal ideal.
Take a homogeneous polynomial $\sum_{\deg(\alpha) = r} c_\alpha \bx^\alpha = f \in \k[\bx]$ with $c_\alpha \neq 0$, for all $\alpha$, that is, every monomial of degree $r$ appears in $f$ with nonzero coefficient. For an index set $A \subseteq [n] := \{1, \ldots, n\}$ we let $\C_A \subset \Q^n$ be the polyhedral cone of tuples $w = (w_1, \ldots, w_n)$ with the property that $w_i = \min\{w_j \mid 1 \leq j \leq n\}$ for all $i \in A^c := [n] \setminus A$, and $\CC_A \subset \C_A$ its relative interior.  It is straightforward to show that, for any $w \in \CC_A$, the initial form $\In_w(f)$ is the sum of those monomial terms $c_\beta \bx^\beta$ such that support of $\beta$ is a subset of $A^c$. Thus, $\In_w(f)$ does not contain any indeterminate $x_i$, $i \in A$. It follows that the quotient algebra $\k[\bx]/\langle \In_w(f) \rangle$ is a polynomial ring in $|A|$ variables over the algebra $\k[x_i \mid i \in A^c]/\langle \In_w(f) \rangle$.   If $|A^c| > 2$ and the coefficients $c_\alpha$ are chosen sufficiently generically, then both $f$ and $\In_w(f)$ are irreducible, so both $\k[\bx]/\langle f \rangle$ and its initial degeneration $\k[\bx]/\langle \In_w(f) \rangle$ are domains.   We generalize these observations to any ideal $J \subset \k[\bx]$ such that $\k[\bx]/J$ is Cohen-Macaulay. 

Now fix a homogeneous ideal $J \subset \k[\bx]$ and consider the ideal $I = g\circ J$ for generic $g \in \GL_n(\k)$.   In \cite{RS1} it is shown that if $\k[\bx]/I$ has Krull dimension $d$, the tropical variety $\Trop(I) \subseteq \Q^n$ is the support of the polyhedral fan $\W^n_d$ composed of the cones $\C_A$ with $|A^c| = n -d + 1$.  Moreover, the fan structure on $\Trop(I)$ detects whether $\k[\bx]/I$ is Cohen-Macaulay or a \emph{almost Cohen-Macaulay} (an algebra for which depth is dimension $-1$).  In particular, the quotient ring of $I$ is Cohen-Macaulay or almost Cohen-Macaulay if and only if the fan structure on $\W_d^n$ induced from the Gr\"obner fan of $I$ coincides with the fan structure defined by the cones $\C_A$ (see \cite[Corollary 4.7]{RS2}). 

Each point $w \in \Trop(I)$ can be used to construct a discrete homogeneous quasivaluation $\v_w: R\setminus \{0\} \to \Q$ and an associated graded algebra $\gr_w(R)$ (see Section \ref{filtration} as well as \cite[Section 2.4]{Kaveh-Manon-NOK}).  Here, $\gr_w(R) \cong \k[x_1, \ldots, x_n]/\In_w(I)$, where $\In_w(I)$ is the initial ideal of $I$ with respect to $w$.  In the following let $I = g \circ J$ for $g \in \GL_n(\k)$ chosen from an appropriate Zariski open subset, and let $y_i = \pi(x_i) \in \k[\bx]/I \cong R$, where $\pi$ is the quotient map.  Our first theorem sharpens the picture provided in \cite{RS2} by giving an explicit description of $\gr_w(R)$. Each $y_i$ gives a $\langle y_i \rangle$-adic filtration $\langle y_i \rangle \supset \langle y_i^2 \rangle \supset \cdots$ which in turn gives a quasivaluation $\ord_i$ on $R$ (see Section \ref{filtration}). We compute $\v_w$ in terms of the functions $\ord_i$ for $1 \leq i \leq n$.  We write $\min(w)$ as a shorthand for $\min\{ w_i \mid 1 \leq i \leq n\}$. 

\begin{theorem}\label{main-associatedgraded-quasival}
With notation as above, let $R$ be Cohen-Macaulay. Let $A \subset [n]$ with $|A^c| \leq n -d+1$ and $w \in \C_A$, then:  
\begin{enumerate}
\item $\gr_w(R) \cong (R/\langle y_i \mid i \in A \rangle)[t_i \mid i \in A]$,
\item $\v_w = (\min(w) \scale \deg) \add (\ADD_{i \in A} ((w_i- \min(w))\scale \ord_i)$.
\end{enumerate}
Here $\deg$ denotes the quasivaluation on $R$ given by homogeneous degree, and the operations $\add$ and $\scale$ are described in Section \ref{filtration}.  
\end{theorem}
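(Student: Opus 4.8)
The plan is to leverage the structural result of R\"omer--Schmitz \cite{RS2}: since $R$ is Cohen-Macaulay, the Gr\"obner fan structure on $\Trop(I)$ refines to exactly the fan $\W^n_d$ with cones $\C_A$, so $\In_w(I)$ depends only on the cone $\C_A$ containing $w$ (and is constant on $\CC_A$). This lets me reduce, for part (1), to computing the initial ideal for a single well-chosen $w \in \CC_A$ and then identifying $\gr_w(R)$ with $\k[\bx]/\In_w(I)$. The key observation, generalizing the principal-ideal computation in the excerpt, is that for $w \in \CC_A$ the indeterminates $x_i$ with $i \in A^c$ carry the \emph{minimal} weight, so one expects $\In_w(I)$ to ``not see'' the variables $x_i$, $i \in A$, except as free polynomial generators; concretely the claim is that $\In_w(I)$ is generated by the image of $I$ under the substitution setting $x_i \mapsto 0$ for $i \in A$, pulled back to $\k[\bx]$, i.e. $\In_w(I) = \langle \In_{w'}(I) \rangle$ where $w'$ lies on the ray $\C_{[n]}$ (all coordinates equal) — equivalently $\In_w(I)$ is the ideal generated by the bottom-degree (in the $A$-variables) parts of elements of $I$. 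Because $R$ is Cohen-Macaulay and $g$ is generic, the sequence $y_i$, $i \in A$, behaves like a regular sequence modulo the other generators (this is precisely the content of the genericity in \cite{RS1, RS2}: a generic linear change of coordinates puts $R$ in ``Noether normal position'' so that generic linear forms form a regular sequence), and this forces $\In_w(I) + \langle x_i : i \in A\rangle = \langle J' \rangle + \langle x_i : i \in A\rangle$ for the appropriate restriction $J'$, yielding the isomorphism $\gr_w(R) \cong (R/\langle y_i : i \in A\rangle)[t_i : i \in A]$ with the $t_i$ the images of $x_i$.

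For part (2) I would argue directly with quasivaluations rather than ideals. The operations $\add$ and $\scale$ on quasivaluations are recalled in Section \ref{filtration}; $\scale$ by a nonnegative scalar rescales a quasivaluation, and $\add$ takes a kind of Minkowski/infimal combination so that $\gr$ of a sum is the tensor product (fiber product) of the associated graded algebras. The right-hand side of (2) is thus a quasivaluation whose associated graded algebra is exactly $(R/\langle y_i : i \in A\rangle)[t_i : i \in A]$ with grading: the $\deg$ part recovers the homogeneous grading scaled by $\min(w)$, and each $(w_i - \min(w))\scale\ord_i$ contributes the $\langle y_i\rangle$-adic degree scaled by the ``excess weight'' $w_i - \min(w)\ge 0$ — matching the weight $w$ places on $x_i$. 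I would verify that $\v_w(y^\alpha \cdot r) = \langle w, \alpha\rangle + \min(w)\deg(r) + \cdots$ on a spanning set and check the two quasivaluations agree on all of $R\setminus\{0\}$; since both have the same (isomorphic, compatibly graded) associated graded algebra and the same values on the generators $y_i$ and on $R/\langle y_i : i\in A\rangle$, they coincide. The normalization $\min(w)\scale\deg$ handles the common shift coming from the fact that $w$ is only defined up to adding the all-ones vector (which acts by the grading).

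A subtle point I would be careful about is the passage from the \emph{strict interior} $\CC_A$ to the whole closed cone $\C_A$: on the boundary some $w_i - \min(w)$ may vanish, or $w$ may lie in a smaller cone $\C_B$ with $B \subsetneq A$, and one must check the formula is consistent under these specializations (it is: a vanishing coefficient $(w_i-\min(w)) = 0$ kills the corresponding $\ord_i$ term, and $R/\langle y_i : i \in A\rangle$ vs.\ $R/\langle y_i : i \in B\rangle$ differ exactly by which $t_i$ are adjoined — but when $w_i = \min(w)$ the variable $x_i$ should be treated as a ``base'' variable, not a free polynomial generator, so the two descriptions must be reconciled, presumably by noting $\ord_i$ is identically the $\deg$-type function in that degenerate direction).

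The main obstacle I anticipate is Step 1 — pinning down $\In_w(I)$ precisely. The containment $\langle \text{low-degree parts}\rangle \subseteq \In_w(I)$ is easy, but the reverse containment, and in particular showing the quotient is a polynomial ring over $R/\langle y_i : i \in A\rangle$ rather than something larger, requires using Cohen-Macaulayness in an essential way (via the regular-sequence behavior of generic linear forms, equivalently via \cite[Corollary 4.7]{RS2} that the Gr\"obner and $\C_A$ fan structures coincide), and one must make sure no ``extra'' initial forms appear. I would extract exactly the statement needed from \cite{RS1, RS2} — namely that $\In_w(I)$ for $w \in \CC_A$ equals the extension to $\k[\bx]$ of the initial ideal of $I$ restricted to the variables indexed by $A^c$ — and then the polynomial-ring structure over $R/\langle y_i : i\in A\rangle$ and the quasivaluation formula both follow by bookkeeping with the filtrations defined in Section \ref{filtration}.
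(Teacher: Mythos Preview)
Your overall strategy is in the right direction---regular sequences are indeed the key---but the proposal has two concrete gaps that the paper fills with specific tools you do not name.

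First, the passage from ``the $y_i$, $i \in A$, form a regular sequence'' to ``$\gr_w(R) \cong (R/J_A)[t_i : i \in A]$'' is precisely \emph{Rees' theorem} (\cite{Rees}, recalled as Proposition~\ref{prop-Rees}): for a regular sequence the associated graded of the $J_A$-adic filtration is a polynomial ring over $R/J_A$. You gesture at this conclusion but give no mechanism for it; without Rees this is the entire content of part (1). Moreover, Rees' theorem computes the associated graded of the \emph{$J_A$-adic} filtration, i.e.\ of $\ord_A$, not of the weight quasivaluation $\v_w$. The paper bridges this gap in Proposition~\ref{prop-value}: Rees' theorem shows that $\bar y$ generates $\gr_{\ord_A}(R)$, so $\bar y$ is a Khovanskii basis for $\ord_A$, and then the uniqueness result \cite[Proposition 3.7]{Kaveh-Manon-NOK} forces $\ord_A = \v_{\epsilon_A}$ and hence $\In_{\epsilon_A}(I) = I_A\k[\bx]$. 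Your plan to instead ``extract from \cite{RS1,RS2}'' the exact form of $\In_w(I)$ will not work as stated: \cite{RS2} supplies only the fan structure on $\Trop(I)$, not the explicit initial ideals, and the paper explicitly remarks that Corollary~\ref{cor-initialideal} \emph{refines} \cite[Proposition 4.6]{RS2} rather than following from it.

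Second, your treatment of $\add$ is imprecise. It is not true in general that the associated graded of a $\add$-sum is a tensor or fiber product, nor that two quasivaluations agreeing on generators and having isomorphic associated gradeds must coincide. What the paper uses (Proposition~\ref{prop-standard-properties}, relying on Proposition~\ref{prop-sum-properties}) is that $\v_u \add \v_{u'} = \v_{u+u'}$ \emph{provided} $u, u'$ lie in a common Gr\"obner cone, so that they share a common adapted standard-monomial basis. The substantive step is Lemma~\ref{lem-samecone}: iterating Rees' theorem on the successive Cohen--Macaulay quotients gives $\In_{\epsilon_{A\setminus\{i\}}}(\In_{\epsilon_i}(I)) = \In_{\epsilon_A}(I)$, hence all the $\epsilon_i$ for $i \in A$ (together with the all-ones vector) lie in a single cone of $\Sigma(I)$. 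Only then does the decomposition $w = \min(w)\mathbf{1} + \sum_{i \in A}(w_i - \min(w))\epsilon_i$ translate, via additivity of $\add$ on that cone, into the formula in part (2). Your direct-verification approach on a ``spanning set'' would need exactly this common-adapted-basis fact to go through, and you have not supplied it.
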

As a corollary of Theorem \ref{main-associatedgraded-quasival} we obtain a description of each initial ideal $\In_w(I)$.  For a subset $A \subseteq [n]$ let $I_A \subset \k[x_j \mid j \in A^c]$ be the kernel of the induced presentation $\pi_A: \k[x_j \mid j \in A^c] \to R/\langle \{y_i \mid i \in A\}\rangle$. 

\begin{corollary}\label{cor-initialideal}
Let $w \in \CC_A$ be as above, then $\In_w(I) = I_A\k[x_1, \ldots, x_n]$.
\end{corollary}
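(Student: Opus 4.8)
The plan is to deduce Corollary \ref{cor-initialideal} directly from part (1) of Theorem \ref{main-associatedgraded-quasival} by comparing presentations. We already know from the discussion preceding the theorem that $\gr_w(R) \cong \k[x_1,\ldots,x_n]/\In_w(I)$, where the isomorphism is the one induced by sending $x_i$ to its image in $\gr_w(R)$. So the task reduces to identifying the kernel of the presentation $\k[x_1,\ldots,x_n] \to \gr_w(R)$ with $I_A\k[x_1,\ldots,x_n]$, using the explicit description $\gr_w(R) \cong (R/\langle y_i \mid i \in A\rangle)[t_i \mid i \in A]$ from Theorem \ref{main-associatedgraded-quasival}(1).

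First I would make the isomorphism of part (1) completely explicit on generators. Under the quasivaluation $\v_w$, for $i \in A$ one has $\v_w(y_i) = w_i > \min(w)$ while for $j \in A^c$ one has $\v_w(y_j) = \min(w)$; these are exactly the degrees appearing in formula (2). The associated graded of $y_j$ for $j \in A^c$ therefore maps to the image $\bar y_j$ of $y_j$ in the subalgebra $R/\langle y_i \mid i\in A\rangle$, and the associated graded of $y_i$ for $i \in A$ maps to the polynomial variable $t_i$. Thus the composite $\k[x_1,\ldots,x_n] \twoheadrightarrow \gr_w(R) \xrightarrow{\ \sim\ } (R/\langle y_i \mid i\in A\rangle)[t_i \mid i\in A]$ sends $x_j \mapsto \bar y_j$ for $j \in A^c$ and $x_i \mapsto t_i$ for $i \in A$. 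The kernel of this composite is $\In_w(I)$, so it remains to compute it from the right-hand description.

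Next I would observe that the composite factors through the surjection $\pi_A \otimes \mathrm{id}: \k[x_j \mid j \in A^c][x_i \mid i \in A] \to (R/\langle y_i \mid i\in A\rangle)[t_i \mid i\in A]$, where $\pi_A: \k[x_j \mid j\in A^c] \to R/\langle y_i\mid i\in A\rangle$ is the presentation defining $I_A$ and the $x_i$, $i\in A$, are relabeled $t_i$. Since forming the polynomial ring $[t_i \mid i \in A]$ is an exact (flat, even free) operation, the kernel of $\pi_A \otimes \mathrm{id}$ is precisely $I_A \cdot \k[x_1,\ldots,x_n]$, the extension of $I_A$ along the inclusion $\k[x_j \mid j\in A^c] \hookrightarrow \k[x_1,\ldots,x_n]$. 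Comparing with the previous paragraph gives $\In_w(I) = I_A\k[x_1,\ldots,x_n]$, as claimed. One small bookkeeping point worth spelling out is that the grading conventions match: $\gr_w(R)$ is naturally $\Q$-graded by $\v_w$-value and the isomorphism in (1) respects this once $t_i$ is assigned degree $w_i$ and $R/\langle y_i \mid i\in A\rangle$ inherits the $\min(w)\cdot\deg$ grading, but since we only need the statement about ideals this compatibility is automatic from the fact that $\In_w(I)$ is a homogeneous ideal and the map on generators is as described.

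The main obstacle, such as it is, is purely organizational: one must be careful that the isomorphism of Theorem \ref{main-associatedgraded-quasival}(1) is the \emph{same} isomorphism (on the nose, on generators) as the standard one $\gr_w(R) \cong \k[\bx]/\In_w(I)$, rather than merely an abstract isomorphism of algebras — otherwise the identification of kernels could be off by an automorphism. This is ensured by tracking where the generators $x_i$ go, which is exactly dictated by the $\v_w$-values computed in part (2); so in practice the corollary is an immediate unwinding of the theorem once formulas (1) and (2) are in hand.
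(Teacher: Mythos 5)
Your strategy is to read the Corollary off from the \emph{statement} of Theorem~\ref{main-associatedgraded-quasival}, whereas the paper proves the Theorem and Corollary simultaneously: Proposition~\ref{prop-value}, which is a direct application of Rees's theorem (Proposition~\ref{prop-Rees}), establishes $\In_{\epsilon_A}(I) = I_A\k[\bx]$ for the specific weight $\epsilon_A$, and the general case $w \in \CC_A$ then follows because $w$ and $\epsilon_A$ lie in the relative interior of the same Gr\"obner cone (Lemma~\ref{lem-samecone}), so $\In_w(I) = \In_{\epsilon_A}(I)$.

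The place where your argument is not airtight is exactly the point you flag as ``purely organizational.'' You claim that the isomorphism $\gr_w(R) \cong (R/J_A)[t_i\mid i\in A]$ necessarily sends the class of $y_j$ to $\bar y_j$ ($j\in A^c$) and the class of $y_i$ to $t_i$ ($i\in A$), and that this is ``dictated by the $\v_w$-values computed in part (2).'' That is not quite true: the values $\v_w(y_i)=w_i$ pin down in which graded piece each generator lands, but they do not single out the isomorphism. If, say, $w_i$ is a multiple of $\min(w)$, the graded piece of $(R/J_A)[t_i]$ in degree $w_i$ typically contains elements of $R/J_A$ in addition to $t_i$, and a graded automorphism of the target mixing these would compose with the ``natural'' isomorphism to give another isomorphism satisfying (1) and (2) verbatim but sending $x_i$ to $t_i + (\text{something in } R/J_A)$. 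Since all these composites have the same kernel $\In_w(I)$, one cannot argue backwards from (1)--(2) alone that the map $x_j\mapsto \bar y_j$, $x_i\mapsto t_i$ is one of them without already knowing $\In_w(I) = I_A\k[\bx]$. What actually resolves this is Proposition~\ref{prop-Rees}, which gives the isomorphism \emph{explicitly} on generators ($t_i \mapsto$ image of $y_i$ in $\J/\J^2$), and Proposition~\ref{prop-value}, which packages exactly the statement you want for $\epsilon_A$. So your last paragraph should cite those explicit sources rather than formula (2); with that substitution the argument is correct, and your ``$\pi_A \otimes \mathrm{id}$ is a free extension, hence exact'' step for extending the kernel is fine. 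As it stands, though, the argument is circular at the generator-tracking step, and the paper's route through $\epsilon_A$ and the Gr\"obner cone is both the cleaner and the logically complete one.
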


\noindent
In particular, Corollary \ref{cor-initialideal} recovers and refines \cite[Proposition 4.6]{RS2}. 

Let  $\bar{y} = \{y_1, \ldots, y_n\} \subset  R$. The image of the generating set $\bar{y}$ generates each associated graded algebra $\gr_w(R)$. Following \cite[Section 2]{Kaveh-Manon-NOK}, $\bar{y}$ is said to be a \emph{Khovanskii basis} for $(R, \v_w)$. In fact all $\Q$-quasivaluations with Khovanskii basis $\bar{y}$ are of the form $\v_w$ for some $w \in \Q^n$.  Such a quasivaluation is a valuation precisely when the initial ideal $\In_w(I)$ is prime. A relatively open cone $\CC \subset \Trop(I)$ with $\In_u(I) = \In_{u'}(I)$ for all $u, u' \in \CC$ is said to be a \emph{prime cone} if $\In_w(I)$ is a prime ideal for all $w \in \CC$ (see \cite{Kaveh-Manon-NOK}).  The next result gives the state of affairs for prime cones in a generic tropicalization $\Trop(I)$.  

\begin{theorem}\label{main-primecones}
Let $R$, $J$, and $I$ be as above.  The ideal $J$ is radical if and only if $\In_w(I)$ is radical for all $w \in \W_d^n$.  The ideal $J$ is prime if and only if $\In_w(I)$ is prime for all $w \in \W_{d-1}^n \subset \W_d^n$. 
\end{theorem}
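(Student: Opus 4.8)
The plan is to use Corollary~\ref{cor-initialideal} to convert both equivalences into statements about general linear sections of $R$, and then to appeal to Bertini-type theorems.

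First I would record the elementary reduction. Fix $w \in \W_d^n$ and put $A = \{\, i : w_i > \min(w) \,\}$, so that $w \in \CC_A$; here $|A| \le d-1$, and $|A| \le d-2$ exactly when $w \in \W_{d-1}^n$. By Corollary~\ref{cor-initialideal}, $\In_w(I) = I_A \, \k[\bx]$, and since $\k[\bx]/I_A\k[\bx] \cong \bigl(\k[x_j : j \in A^c]/I_A\bigr)[x_i : i \in A]$ while a polynomial ring over a ring $S$ is reduced (resp.\ a domain) if and only if $S$ is, the ideal $\In_w(I)$ is radical (resp.\ prime) if and only if $\k[x_j : j\in A^c]/I_A \cong R/\langle y_i : i\in A\rangle$ is reduced (resp.\ a domain). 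Because $g$ is generic, for every $A$ with $|A| \le d$ the elements $\{y_i : i\in A\}$ form a generic partial homogeneous system of parameters for the $d$-dimensional Cohen--Macaulay ring $R$; hence they are a regular sequence, $R/\langle y_i : i\in A\rangle$ is Cohen--Macaulay of Krull dimension $d-|A|$, and it is the quotient of $R$ by $|A|$ general linear forms.

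The ``if'' directions are immediate: $\In_0(I) = I$, and $0$ lies in $\W_{d-1}^n$ (it lies in the lineality cone $\C_\emptyset$, which has codimension $\ge 1$ whenever $d \ge 2$; for $d = 1$ the assertion is the tautology that $J$ is radical, resp.\ prime, if and only if $I$ is), so since $g$ is invertible, $I = g\circ J$ is radical (resp.\ prime) if and only if $J$ is. (Equivalently, radicality or primality of any $\In_w(I)$ descends to $I$, hence to $J$, through the standard flat degeneration of $\k[\bx]/I$ to $\k[\bx]/\In_w(I)$.) For the forward directions I would invoke Bertini. Suppose $J$ is radical, so $R$ is reduced; then for $w \in \W_d^n$ the ring $R/\langle y_i : i\in A\rangle$, with $|A| \le d-1$, is the quotient of a reduced Cohen--Macaulay graded $\k$-algebra of dimension $d$ by at most $d-1$ general linear forms, and stays reduced by the classical Bertini theorem on preservation of reducedness under general hyperplane sections (valid over any algebraically closed field; the point is that one never lowers the Krull dimension below $1$), so $\In_w(I)$ is radical. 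Suppose now $J$ is prime, so $R$ is a domain, and let $w \in \W_{d-1}^n$, so $|A| \le d-2$. Then $S := R/\langle y_i : i\in A\rangle$ is the quotient of a Cohen--Macaulay graded domain of dimension $d$ by at most $d-2$ general linear forms: it is Cohen--Macaulay, it is reduced by the case just treated (as $d-2 \le d-1$), and $\Proj S$ is irreducible by the Bertini irreducibility theorem, since each successive section is performed on a projective variety of dimension $\ge d - |A| \ge 2$, which is exactly what the bound $|A| \le d-2$ encodes. Finally, a positively graded Cohen--Macaulay $\k$-algebra of Krull dimension $\ge 1$ that is reduced and has irreducible $\Proj$ is a domain: its minimal primes are all of top dimension, hence correspond bijectively to the irreducible components of its $\Proj$, so there is just one, and --- $S$ being reduced --- it is $(0)$. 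Therefore $S$ is a domain and $\In_w(I)$ is prime.

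The main obstacle is marshalling the Bertini input in exactly the form required: preservation of reducedness, respectively of irreducibility, under a general linear section of a Cohen--Macaulay (hence equidimensional) positively graded $\k$-algebra, with the sharp dimension thresholds $d-1$ and $d-2$, over an arbitrary algebraically closed field. These facts are classical and are precisely of the kind developed through the Bertini methods of \cite{KMM}. One must also check that a single generic $g \in \GL_n(\k)$ meets all of the relevant dense open conditions at once --- those making each $\{y_i : i \in A\}$ a generic partial system of parameters, those underpinning Theorem~\ref{main-associatedgraded-quasival} and Corollary~\ref{cor-initialideal}, and the R\"omer--Schmitz genericity fixing $\Trop(I) = \W_d^n$ together with its cone structure --- which is harmless, a finite intersection of dense opens being dense open.
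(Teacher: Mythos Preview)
Your proposal is correct and follows essentially the same route as the paper. Both arguments reduce the forward direction, via the structure result (the paper uses Theorem~\ref{main-associatedgraded-quasival}, you use its Corollary~\ref{cor-initialideal}), to showing that $R/\langle y_i : i\in A\rangle$ is reduced (resp.\ a domain), and both then invoke Bertini-type theorems; the paper packages this step as Proposition~\ref{prop-Bertini}, while you unfold it directly, including the passage from ``$\Proj S$ irreducible and $S$ reduced Cohen--Macaulay'' to ``$S$ a domain,'' which the paper handles via Serre's conditions $R_0$ and $S_1$. The only real difference is in the converse direction: the paper applies Proposition~\ref{prop-noker} (any quasivaluation with reduced or integral associated graded forces the same on $R$), whereas you simply take $w=0$ and use $\In_0(I)=I$. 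Your argument is more elementary and perfectly adequate for the theorem as stated; the paper's argument has the mild advantage of showing that reducedness or integrality of a \emph{single} $\In_w(I)$ already forces the corresponding property on $J$.
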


\noindent
Let ${\bf 1}$ denote the all $1$'s vector. From \cite[Theorem 1]{Kaveh-Manon-NOK} and Theorem \ref{main-primecones} it follows that if $R$ is a domain then any linearly independent collection $\w = \{w_1, \ldots, w_{d-2}, {\bf 1} \} \subset \C_A$ with $|A| = d-2$ defines an integral, rank $d-1$ valuation $\v_\w: R\setminus \{0\} \to \Z^{d-1}$ with a finite Khovanskii basis.  The associated graded algebra $\gr_\w(R)$ can be computed with Theorem \ref{main-associatedgraded-quasival}.  Valuations of this type define flat degenerations of $\Proj(R)$ to \emph{complexity-$1$ $T$-varieties}, and are studied in \cite{KMM}.

When the initial ideals coming from points in the tropical variety $\Trop(I)$ are all prime, $I$ is said to be \emph{well-poised}. This property was defined in \cite{Ilten-Manon}, where it was shown that the semi-canonical embeddings of rational, complexity-$1$ $T$-varieties are always well-poised.  Other examples are the Pl\"ucker embeddings of Grassmannian varieties of $2$-planes and any monomial-free linear ideal.  Theorem \ref{main-primecones} shows that the generic tropicalization of a Cohen-Macaulay domain almost has this property. 

\begin{corollary}  \label{cor-well-poised-codim1}
Let $R$ be Cohen-Macaulay, with $I$ as above. Then $\Trop(I)$ is well-poised in codimension $1$. That is, if $w$ belongs to a codimension $1$ cone in $\Trop(I)$, for the fan structure coming from the Gr\"obner fan, then 
$\In_w(I)$ is prime. 
\end{corollary}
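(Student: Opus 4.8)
The plan is to read the statement off Theorem~\ref{main-primecones}; the only genuine content is a dictionary between its cones and the codimension-one cones of the Gr\"obner fan. First I would record the elementary fact that $\dim\C_A=|A|+1$: a point $w\in\C_A$ is determined by the common value $c=\min(w)$ together with the coordinates $w_k\ge c$ for $k\in A$, and these inequalities carve out a full-dimensional subcone. Since $\Trop(I)$ is the support of $\W^n_d$, whose maximal cones $\C_A$ (with $|A^c|=n-d+1$) have dimension $d$, the codimension-one cones of this fan are exactly the $\C_A$ with $|A|=d-2$ (assuming $d\ge 2$, else there are no such cones); these are the maximal cones of the subfan $\W^n_{d-1}$, which is pure of dimension $d-1$, so their union is all of $\W^n_{d-1}$. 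Because $R$ is Cohen--Macaulay, \cite[Corollary~4.7]{RS2} identifies the fan structure that the Gr\"obner fan of $I$ induces on $\Trop(I)$ with the one whose cones are the $\C_A$. Hence ``$w$ lies in a codimension-one cone of the Gr\"obner fan'' and ``$w\in\W^n_{d-1}$'' are the same condition, and the second assertion of Theorem~\ref{main-primecones} --- applicable because $R$, being a domain, has prime defining ideal $J$ --- gives that $\In_w(I)$ is prime for every such $w$. (The hypothesis should read ``Cohen--Macaulay domain,'' matching the abstract; if $R$ is only assumed reduced, the first half of Theorem~\ref{main-primecones} gives instead that each such $\In_w(I)$ is radical.)

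It is worth spelling out the geometry that Theorem~\ref{main-primecones} encodes in this codimension, since that is where the real work sits. By Corollary~\ref{cor-initialideal}, for $w\in\CC_A$ with $|A|=d-2$ one has $\In_w(I)=I_A\,\k[\bx]$, and since adjoining the free variables $x_i$ ($i\in A$) preserves domain-hood, this ideal is prime exactly when $\k[x_j\mid j\in A^c]/I_A\cong R/\langle y_i\mid i\in A\rangle$ is a domain. Because $g$ is generic, applying $g^{-1}$ identifies this ring with the quotient of $\k[\bx]/J$ by $d-2$ general linear forms, i.e.\ with the homogeneous coordinate ring of a general codimension-$(d-2)$ linear section of $X=\Proj R$, which has dimension $1$. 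When $J$ is prime, $X$ is integral of dimension $d-1$, and Bertini's theorem that a general hyperplane section of an integral projective scheme of dimension $\ge 2$ over an algebraically closed field is again integral (valid in every characteristic) says that cutting a graded domain of Krull dimension $\ge 3$ by a general linear form again yields a domain; since the intermediate rings have dimensions $d,d-1,\dots,3$ before being cut, iterating $d-2$ times shows $\k[x_j\mid j\in A^c]/I_A$ is a domain. One then intersects the resulting Zariski-open conditions over the finitely many $A$ with $|A|=d-2$, and with the open locus of \cite{RS1,RS2}, to produce~$g$.

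The main obstacle in a from-scratch proof is precisely this bookkeeping of genericity: one must verify that for generic $g\in\GL_n(\k)$ the span $\langle y_i\mid i\in A\rangle\subset R_1$ consists of linear forms general enough to feed into Bertini's theorem --- a nonempty Zariski-open condition on $g$, since the map from $\GL_n(\k)$ to the relevant Grassmannian is dominant --- simultaneously for all of the finitely many subsets $A$ with $|A|=d-2$, and one must take care to halt the hyperplane cutting at Krull dimension $2$ (so that irreducibility is not lost) rather than $1$, and to argue cleanly that ``$\Proj$ integral and of positive dimension'' forces the graded ring to be a domain. Since all of this is already packaged inside Theorem~\ref{main-primecones}, from that result the corollary costs only the cone computation of the first paragraph.
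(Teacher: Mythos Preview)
Your derivation is correct and matches the paper's intent: the corollary is stated without a separate proof and is meant to be read off directly from Theorem~\ref{main-primecones}, exactly as in your first paragraph. The only ingredient beyond that theorem is the identification of the codimension-$1$ cones of the Gr\"obner-fan structure on $\Trop(I)$ with the cones $\C_A$, $|A|=d-2$, i.e.\ with $\W^n_{d-1}$; this is precisely the content of \cite[Corollary~4.7]{RS2} in the Cohen--Macaulay case, and you invoke it correctly. Your observation that the hypothesis should include ``domain'' is also well taken: the paper's abstract and the surrounding discussion both assume $R$ is a domain, and Theorem~\ref{main-primecones} needs $J$ prime to conclude primality. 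Your second and third paragraphs accurately summarize the Bertini-type machinery (Proposition~\ref{prop-Bertini}) that the paper packages into Theorem~\ref{main-primecones}, but as you note, none of that is needed once Theorem~\ref{main-primecones} is in hand.
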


\begin{remark}  \label{rem-KMM}
For a positively graded domain $R$ (not necessarily Cohen-Macaulay) \cite{KMM} uses the Bertini irreducibility theorem to construct a valuation with corank $1$ whose associated graded algebra is finitely generated. The above corollary shows that, when $R$ is Cohen-Macaulay, in fact one can construct such a valuation using any codimension $1$ cone in the generic tropical variety. This makes a direct connection between constructions in \cite{Kaveh-Manon-NOK} and \cite{KMM}.
\end{remark}

With above assumptions, it turns out that the additional property of being well-poised is a strong requirement. 

\begin{corollary}\label{cor-wellpoised}
Let $R$ be Cohen-Macaulay, with $I$ as above. Then $I$ is well-poised if and only if it is a linear ideal.  In particular, in this case $R$ must be isomorphic to a polynomial ring over $\k$.  
\end{corollary}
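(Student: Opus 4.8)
The plan is to prove the two implications separately; the reverse one is routine and the real content is showing that a well-poised Cohen--Macaulay $I$ must be linear. I assume throughout that $d := \dim R \ge 1$ (if $d=0$ then $R$ is Artinian and $\Trop(I)=\emptyset$, a degenerate case). For ``$I$ linear $\Rightarrow$ well-poised'': since $g$ is generic, the positive-dimensional linear space $V(I)=g\cdot V(J)$ lies in no coordinate hyperplane, so $I$ contains no variable and is therefore a monomial-free linear ideal, which is well-poised. Alternatively one argues directly via Theorem~\ref{main-associatedgraded-quasival}(1): when $R$ is a polynomial ring and the $y_i$ are in general linear position, $\gr_w(R)\cong (R/\langle y_i\mid i\in A\rangle)[t_i\mid i\in A]$ is again a polynomial ring, hence a domain, so $\In_w(I)$ is prime because $\k[\bx]/\In_w(I)\cong\gr_w(R)$.

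For ``well-poised $\Rightarrow$ $I$ linear'' I would proceed as follows. First pick a \emph{maximal} cone $\C_A$ of $\Trop(I)=\W_d^n$, so $|A|=d-1$, together with $w\in\CC_A$; by Theorem~\ref{main-associatedgraded-quasival}(1) we get $\k[\bx]/\In_w(I)\cong\gr_w(R)\cong R'[t_i\mid i\in A]$ with $R':=R/\langle y_i\mid i\in A\rangle$, so well-poisedness forces $R'$ to be a domain. Next, because $g$ is generic and $R$ is Cohen--Macaulay of dimension $d$, the $d-1$ linear forms $\{y_i\mid i\in A\}$ form a regular sequence, so $R'$ is a standard graded Cohen--Macaulay domain of Krull dimension $1$ over the algebraically closed field $\k$. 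The key observation is that such a ring must be a polynomial ring in one variable: $\Proj(R')$ is an integral $0$-dimensional $\k$-scheme, i.e.\ a single reduced point, so its degree is $1$ and $\dim_\k R'_m=1$ for $m\gg 0$; since $R'$ is a positive-dimensional domain it contains a nonzero linear form, multiplication by which makes $m\mapsto\dim_\k R'_m$ nondecreasing, and as it starts at $\dim_\k R'_0=1$ we conclude $\dim_\k R'_m=1$ for all $m$, i.e.\ $R'$ has Hilbert series $(1-s)^{-1}$.

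The conclusion then comes from Hilbert-series bookkeeping: since $\{y_i\mid i\in A\}$ is a regular sequence of $d-1$ linear forms, $R$ has Hilbert series $(1-s)^{-(d-1)}\cdot(1-s)^{-1}=(1-s)^{-d}$, so $\dim_\k R_1=d$ and the linear ideal $I':=\langle I_1\rangle$ generated by the degree-one part of $I$ is generated by $n-d$ independent linear forms; thus $\k[\bx]/I'\cong\k[z_1,\dots,z_d]$, which has the same Hilbert series as $\k[\bx]/I$. Since $I'\subseteq I$, the surjection $\k[\bx]/I'\twoheadrightarrow\k[\bx]/I$ is then an isomorphism, so $I=I'$ is linear and $R$ is a polynomial ring in $d$ variables. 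The one step that needs care is the genericity claim that $\{y_i\mid i\in A\}$ is a regular sequence in $R$ for $g$ in a suitable Zariski-open set — the standard argument is that a generic linear form avoids the finitely many (positive-dimensional) associated primes of a Cohen--Macaulay algebra and one iterates, and this open condition intersects harmlessly with the ones already imposed on $g$. I expect that genericity argument and the identification of $1$-dimensional standard graded domains to be the only points beyond routine verification.
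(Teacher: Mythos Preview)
Your proof is correct and follows essentially the same route as the paper: pick a maximal cone $\C_A$ with $|A|=d-1$, use Theorem~\ref{main-associatedgraded-quasival} to identify $\gr_w(R)\cong R'[t_i\mid i\in A]$, argue that the one-dimensional standard graded domain $R'$ must be $\k[t]$, and then compare Hilbert functions of $\k[\bx]/I$ and $\k[\bx]/\langle I_1\rangle$ to conclude $I$ is linear. The paper packages this as a slightly sharper three-way equivalence (primeness of a \emph{single} maximal cone suffices), which your argument in fact also establishes since you only ever use one cone; your expansions of the step $R'\cong\k[t]$ and of the regular-sequence genericity are more detailed than the paper's, but the ideas are identical.
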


\noindent \textbf{Acknowledgement:} We would like to thank Lara Bossinger, Milena Wrobel, Mateusz Micha\l ek and Bernd Sturmfels for useful correspondence. The problem of studying ``almost-toric degenerations'' arising from prime cones of codimension $1$ in a tropical variety was suggested by Bernd Sturmfels. We also thank anonymous referees for useful suggestions and comments that greatly improved the content and presentation of the paper.

\section{Filtrations and quasivaluations}\label{filtration}

Let $\Gamma$ be an ordered group, and let $\bar{\Gamma} = \Gamma \cup \infty$. Recall that a \emph{quasivaluation} $\v: R \to \bar\Gamma$ over $\k$ is a function satisfying the following axioms for all $f, g \in R$:

\begin{itemize}
\item $\v(fg) \geq \v(f) +\v(g)$,
\item $\v(f + g) \geq \min\{\v(f), \v(g)\}$,
\item $\v(Cf) = \v(f)$, for all $C \in \k\setminus \{0\}$.
\end{itemize}

\noindent
We say $\v$ is a \emph{valuation} if $\v(fg) = \v(f) + \v(g)$.  Let $S(R, \v) \subset \Gamma$ be the set of values of $\v$; this is a semigroup under the group operation in $\Gamma$ if $\v$ is a valuation, and is referred to as the \emph{value semigroup} of $\v$ in this case.   The \emph{kernel} of a quasivaluation is the vector space of elements which are sent to $\infty$.  We say a quasivaluation $\v$ is homogeneous with respect to the grading on $R$ if the value $\v(f)$ of an element $f \in R$ is always achieved on one of its homogeneous components. In the sequel we deal with groups $\Gamma \subseteq \Q$, and we assume that the subgroup $\langle S(R, \v) \rangle \subseteq \Gamma$ is discrete.  Moreover, we take quasivaluations to be homogeneous (when this makes sense), and we assume the kernel is $\{0\}$, i.e. $\v(f) = \infty$ if and only if $f = 0$. 

For each quasivaluation $\v$ we obtain an $\Gamma$-algebra filtration of $R$ given by the spaces $$F_r(\v) = \{f \mid \v(f) \geq r\}.$$ In particular, for $r \leq s$ we have  $F_r(\v) \supseteq F_s(\v)$ and $F_r(\v)F_s(\v) \subseteq F_{r + s}(\v)$.   The filtration $F(\v)$ is homogeneous in the sense that $F_r(\v) = \bigoplus_{n \geq 0} F_r(\v) \cap R_n$, {where $R_n$ denotes the $n$-graded piece of $R$}.  In particular, the set of homogeneous elements of degree $d$ with value greater than or equal to $r$ is always a finite dimensional vector space.  Under this operation, the kernel of $\v$ is the vector space $\bigcap_{r \in \Gamma} F_r(\v)$.  Conversely, if we start with a homogeneous algebra filtration $F$ of $R$ with $\bigcap_{r \in \Gamma} F_r = \{0\}$, we obtain a quasivaluation $\v_F: R \to \bar{\Gamma}$ with kernel $\{0\}$, where $$\v_F(f) = \max\{r \mid \v(f) \geq r\}.$$  For any algebra filtration $F$ the \emph{associated-graded} algebra is defined as follows:

\begin{equation}
\gr_F(R) = \bigoplus_{r \in \Gamma} F_r/F_{> r}.
\end{equation}

\begin{example}
Any homogeneous ideal $I \subset R$ has a corresponding $I$-adic filtration by the powers $I^r \subset R$. We let $\v_I: R \to \bar{\Z}$ denote the quasivaluation corresponding to the $I$-adic filtration and $\gr_I(R)$ be the associated graded algebra. 
\end{example}

\begin{proposition}\label{prop-noker}
Let $\v:R \to \bar{\Gamma}$ be a quasivaluation with trivial kernel. If $\gr_\v(R)$ is reduced, then $R$ is reduced, and if $\gr_\v(R)$ is a domain, then $R$ is a domain. 
\end{proposition}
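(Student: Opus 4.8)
The plan is to use the \emph{symbol map}. For a nonzero $f \in R$, triviality of the kernel gives $\v(f) \in \Gamma$, and I set $\In_\v(f)$ to be the class of $f$ in $F_{\v(f)}(\v)/F_{>\v(f)}(\v) \subseteq \gr_\v(R)$; since $f \notin F_s(\v)$ for any $s > \v(f)$, this element $\In_\v(f)$ is nonzero, so $f \mapsto \In_\v(f)$ detects whether $f$ is zero. The single fact doing all the work is the compatibility of this map with products: for nonzero $f, g$ the filtration axiom $F_{\v(f)}(\v)F_{\v(g)}(\v) \subseteq F_{\v(f)+\v(g)}(\v)$ shows $\v(fg) \geq \v(f)+\v(g)$, while by the definition of multiplication in $\gr_\v(R)$ the product $\In_\v(f)\cdot\In_\v(g)$ is precisely the class of $fg$ in $F_{\v(f)+\v(g)}(\v)/F_{>\v(f)+\v(g)}(\v)$. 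Hence \emph{if} $\In_\v(f)\In_\v(g) \neq 0$ in $\gr_\v(R)$, then $fg \notin F_{>\v(f)+\v(g)}(\v)$, so $\v(fg) = \v(f)+\v(g)$, $\In_\v(fg) = \In_\v(f)\In_\v(g)$, and in particular $fg \neq 0$.

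From this I would deduce, by an immediate induction on $m$, that whenever $\In_\v(f)^m \neq 0$ in $\gr_\v(R)$ one has $\In_\v(f^m) = \In_\v(f)^m$, and in particular $f^m \neq 0$ in $R$.

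I would then conclude by contraposition. If $R$ is not a domain, choose nonzero $f, g$ with $fg = 0$; then $\In_\v(f)$ and $\In_\v(g)$ are nonzero in $\gr_\v(R)$, but $\In_\v(f)\In_\v(g)$ must vanish, for otherwise $fg \neq 0$ by the compatibility above, so $\gr_\v(R)$ is not a domain. Likewise, if $R$ is not reduced, choose nonzero $f$ with $f^m = 0$ for some $m \geq 2$; then $\In_\v(f) \neq 0$ while $\In_\v(f)^m = 0$ (otherwise $f^m \neq 0$), so $\gr_\v(R)$ is not reduced. The argument is essentially formal, and the only step requiring care — the closest thing to an obstacle — is verifying that the product in $\gr_\v(R)$ is computed by multiplying symbols and lands in the expected graded piece; this uses only the inequality $F_r(\v)F_s(\v) \subseteq F_{r+s}(\v)$ together with triviality of the kernel, and never uses that $\v$ is multiplicative.
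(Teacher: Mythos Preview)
Your proof is correct and follows essentially the same approach as the paper's: both argue by contraposition, pass to the symbol $\bar f = \In_\v(f)$ in $F_{\v(f)}/F_{>\v(f)}$, and use that the product of symbols is the class of $fg$ in $F_{\v(f)+\v(g)}/F_{>\v(f)+\v(g)}$. The only cosmetic difference is that the paper handles the nilpotent case slightly more directly---if $f^m=0$ then $f^m\in F_{>m\v(f)}$ trivially, so $\bar f^{\,m}=0$---whereas you phrase the same step via its contrapositive and an induction; no substantive difference.
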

\begin{proof}
We show that if $R$ has a nilpotent or a zero divisor, then $\gr_\v(R)$ does as well.  Suppose $0 \neq f \in R$, and $\v(f) = r \in \Gamma$.  Let $\bar{f}$ be the image of $f \in F_r(\v)$ under the projection $F_r(\v) \to F_r(\v)/F_{> r}(\v)$.  By definition, $\bar{f}^n$ is computed by taking the image of $f^n \in F_{nr}(\v)$ under the projection $F_{nr}(\v) \to F_{nr}(\v)/F_{>nr}(\v)$.   If $f \neq 0$ and $f^n = 0$, then $\bar{f} \neq 0$, and $\bar{f}^n = 0$, since $f^n \in F_{>nr}(\v)$.   Similarly, if $f, g \neq 0$ with $\v(f) = r$, $\v(g) = s$ and $fg = 0$, then $\bar{f}, \bar{g} \neq 0$ and $\bar{f}\bar{g} = 0$ since $fg \in F_{>r + s}(\v)$.
\end{proof}

Now we define the operation $\add$ on the set of quasivaluations on $R$, this will be used in the proof of Theorem \ref{main-associatedgraded-quasival}.  

\begin{definition}\label{def-sum}
For quasivaluations $\v_1, \v_2$ on $R$, define $\v_1 \add \v_2$ to be the quasivaluation defined by the filtration composed of the following spaces:
\begin{equation}
F_r(\v_1 \add \v_2) = \sum_{r_1 + r_2 = r} F_{r_1}(\v_1) \cap F_{r_2}(\v_2).
\end{equation}
\end{definition}

Suppose $\Gamma$ is a $\Q$-vector space.  For $w \in \Q_{\geq 0}$ we let $w\scale \v$ denote the quasivaluation obtained by scaling the values of $\v$ by $w$, that is, $(w \scale \v)(f) = w\,\v(f)$, for all $f \in R$. It is straightforward to check that $F_r(w\scale \v) = F_{\frac{r}{w}}(\v)$ and that $n\scale \v = \ADD_{i =1}^n \v$ for $n \in \Z_{\geq 0}$.  From now on we will speak of $\add$ on filtrations and quasivaluations interchangeably.  

We say that a vector space basis $\mathbb{B}\subset R$ is an \emph{adapted basis} for a filtration $F$ if $F_r \cap \mathbb{B}$ is a basis for $F_r$, for all $r$.

The operation $\add$ is not associative in general.  However, $\add$ is associative on filtrations $F^1, F^2, \ldots, F^\ell$ if  the $F^i$ have a common   adapted basis $\mathbb{B}$. 

\begin{proposition}\label{prop-sum-properties}
Let $F^1, \ldots, F^\ell$ be homogeneous filtrations corresponding to discrete quasivaluations on $R$.   The subspaces $\{F^i_r\mid 1 \leq i \leq \ell, r \in \Gamma\}$ generate a distributive lattice of subspaces of $R$ (with operations of taking intersection and taking linear span of union) if and only if the $F^i$ share a common adapted basis. In this case, $\add$ defines an associative operation on the set of quasivaluations defined by taking multiples and $\add$-sums of $\v_{F^1}, \ldots, \v_{F^\ell}$. Finally, if $F$, $G$ are filtrations sharing a common adapted basis $\mathbb{B}$ then $\v_{F \add G}(b) = \v_F(b) + \v_G(b)$ for any $b \in \mathbb{B}$.    
\end{proposition}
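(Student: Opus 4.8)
The plan is to reduce the equivalence to a standard statement about distributive lattices of subspaces of a single finite-dimensional vector space. Because the $F^i$ are homogeneous and the quasivaluations discrete, one may work in a fixed graded piece $R_m$: the assignment $U \mapsto U \cap R_m$ is a lattice homomorphism on graded subspaces, so the lattice generated by $\{F^i_r\}$ in $R$ surjects onto the lattice $L_m$ generated by $\{F^i_r \cap R_m\}$ in $R_m$; thus if the former is distributive so is each $L_m$, and conversely a common adapted basis of $R$ can be assembled as the union of common adapted bases of the $R_m$, while the existence of such a basis forces the lattice in $R$ to be distributive by the easy direction below. Since $r$ runs over a discrete group, each chain $F^i_\bullet \cap R_m$ takes finitely many values, so $L_m$ is generated by finitely many subspaces and — once distributive — is finite; note $0, R_m \in L_m$. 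The easy direction is: if $\mathbb{B}$ is a common adapted basis then $F^i_r = \mathrm{span}(\mathbb{B}\cap F^i_r)$, and on subsets of $\mathbb{B}$ the operations $\cap$ and $+$ match $\cap$ and $\cup$ under $S \mapsto \mathrm{span}(S)$, the only nontrivial point being $\mathrm{span}(S)\cap\mathrm{span}(T) \subseteq \mathrm{span}(S\cap T)$, which is immediate from uniqueness of basis expansions; hence the generated lattice embeds into the Boolean algebra $2^{\mathbb{B}}$ and is distributive.

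The converse — that a finite distributive lattice $L$ of subspaces of a finite-dimensional $V$ with $0, V \in L$ has a basis $\mathbb{B}$ of $V$ with $\mathbb{B}\cap W$ a basis of $W$ for all $W \in L$ — is the main obstacle. I would argue by induction on $|L|$. Pick a coatom $W \lessdot V$, so that $A \vee W = V$ for every $A \in L$ not contained in $W$. Apply the inductive hypothesis to the strictly smaller sublattice $\{A \in L : A \subseteq W\}$ to obtain an adapted basis $\mathbb{B}_W$ of $W$. Set $N = \bigwedge\{A \in L : A \not\subseteq W\}$; distributivity enters here via $N \vee W = \bigwedge(A\vee W) = V$, which forces $N \not\subseteq W$ and $\dim N/(N\cap W) = \dim V/W =: k$. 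Lift a basis of $N/(N\cap W)$ to vectors $v_1,\dots,v_k \in N$ and put $\mathbb{B} = \mathbb{B}_W \sqcup \{v_1,\dots,v_k\}$; this is a basis of $V$, and for $A \subseteq W$ one has $\mathbb{B}\cap A = \mathbb{B}_W \cap A$, while for $A \not\subseteq W$ one has $N \subseteq A$, hence $\mathbb{B}\cap A = (\mathbb{B}_W \cap (A\cap W)) \sqcup \{v_1,\dots,v_k\}$, of cardinality $\dim(A\cap W) + k = \dim A$. Checking that $\mathbb{B}$ is genuinely adapted, in particular this last dimension count, is the step I expect to require the most care.

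Finally, for the last two sentences: if $F$ and $G$ share a common adapted basis $\mathbb{B}$, write $\mathbb{B}_{F\ge s} = \{b \in \mathbb{B} : \v_F(b) \ge s\}$ and similarly for $G$; uniqueness of basis expansions gives $F_{r_1}\cap G_{r_2} = \mathrm{span}(\mathbb{B}_{F\ge r_1}\cap \mathbb{B}_{G\ge r_2})$, so $F_r(F\add G) = \sum_{r_1+r_2 = r} F_{r_1}\cap G_{r_2} = \mathrm{span}\{b \in \mathbb{B} : \v_F(b)+\v_G(b) \ge r\}$. This simultaneously shows that $\mathbb{B}$ is adapted to $F \add G$ and that $\v_{F\add G}(b) = \v_F(b) + \v_G(b)$ for $b \in \mathbb{B}$; an identical computation gives that $w \scale F$ has adapted basis $\mathbb{B}$ with $\v_{w\scale F}(b) = w\,\v_F(b)$. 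Consequently every quasivaluation formed from $\v_{F^1},\dots,\v_{F^\ell}$ by scalings and applications of $\add$ retains $\mathbb{B}$ as an adapted basis, and such a quasivaluation is determined by its restriction to $\mathbb{B}$ via $\v(f) = \min\{\v(b) : b \in \mathbb{B}\ \text{occurs in}\ f\}$; it therefore corresponds to the $\mathbb{Q}_{\ge 0}$-linear combination of the functions $b \mapsto \v_{F^i}(b)$ with the same coefficients, with $\add$ acting as ordinary addition of $\Gamma$-valued functions. Associativity of $\add$ on this set of quasivaluations is then just associativity of addition of functions.
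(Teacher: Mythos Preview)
Your proof is correct and follows the same overall architecture as the paper's: reduce to graded pieces, use the equivalence between distributivity and the existence of a common adapted basis in finite dimensions, and then verify the remaining claims directly on basis elements. The two substantive differences are both on the side of self-containment. First, the paper simply cites \cite[Section~3]{DJS} for the finite-dimensional equivalence, whereas you supply an explicit inductive proof via a coatom $W$ and the element $N = \bigwedge\{A : A \not\subseteq W\}$; your argument is sound (the crux being that distributivity gives $N \vee W = V$, whence $(A\cap W) + N = A$ for every $A \not\subseteq W$, which makes the dimension count go through). Second, for associativity the paper directly expands $((F^i\add F^j)\add F^k)_r$ and $(F^i\add(F^j\add F^k))_r$ using the distributive law $(\sum_\alpha U_\alpha)\cap W = \sum_\alpha(U_\alpha\cap W)$, while you instead show that $\mathbb{B}$ remains adapted to every derived filtration and identify each such quasivaluation with its restriction $\mathbb{B}\to\Gamma$, so that $\add$ becomes ordinary addition of functions. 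Your route is more elementary and conceptually transparent; the paper's is shorter by outsourcing the lattice lemma.
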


\begin{proof}
If the $F^i$ share a common adapted basis $\mathbb{B}$, then each space $F^i_r$ corresponds to a subset $\B_r^i = \B \cap F_r^i$ and the operations of intersection and sum correspond to intersection and union of these subsets, respectively.  It follows that the $F_r^i$ generate a distributive lattice in the subspaces of $R$.   Conversely, if the $F_r^i$ generate such a lattice, then the same holds for their intersection with any graded component $R_n$. A distributive lattice of subspaces of a finite dimensional vector space always has an adapted basis (see \cite[Section 3]{DJS}).  Let $\mathbb{B}_n$ be this basis for $n \in \Z_{\geq 0}$, then $\mathbb{B} = \coprod_{n \geq 0} \mathbb{B}_n$ is adapted to each of the $F^i$.  

Now, for any $r \in \Q$ we have: 

\[ \sum_{s + s_3 = r} (\sum_{s_1 + s_2 = s} F^i_{s_1} \cap F^j_{s_2}) \cap F^k_{s_3} = \sum_{s_1 + s' = r} F^i_{s_1} \cap (\sum_{s_2 + s_3 = s'}  F^j_{s_2} \cap F^k_{s_3}),\]

\noindent
this implies that $((\v_{F^i}\add \v_{F^j}) \add \v_{F^k})^{-1}(r) =  (\v_{F^i}\add (\v_{F^j}) \add \v_{F^k}))^{-1}(r)$. Observe that any collection of filtrations obtained by $\add$ and scaling from the $F^i$ also share the basis $\mathbb{B}$, so the first part of this proof applies.  Finally, we let $b \in \mathbb{B} \subset R$ and we suppose that $F$, $G$ are filtrations adapted to $\mathbb{B}$.  If $b \in F_r$, $b \notin F_{< r}$ and $b \in G_s$, $b \notin G_{<s}$ then $b \in F_r \cap G_s \subset \sum_{t_1 + t_2 = r + s} F_{t_1} \cap G_{t_2}$. If $t_1 + t_2 < r + s$ then without loss of generality we may assume that $t_1 < r$.  It follows that $b \notin F_{t_1} \cap G_{t_2}$.  The set $\mathbb{B}$ is an adapted basis to each space $F_{t_1} \cap G_{t_2}$ and their sum, it follows that $b$ is in $\sum_{t_1 + t_2 < r + s} F_{t_1} \cap G_{t_2}$ if and only if $b$ is in one of the $F_{t_1} \cap G_{t_2}$, a contradiction.  
\end{proof}

Now we assume that $\Gamma = \Z$ or $\Q$ with the usual total ordering. An element $u \in \Gamma^n$ determines a valuation $\bar{\v}_u: \k[\bx] \to \bar{\Gamma}$ defined by sending a monomial $\bx^\alpha$ to the inner product $\langle u, \alpha \rangle$, and a polynomial $\sum c_\alpha \bx^\alpha$ to $\min\{\bar{\v}_u(\bx^\alpha) \mid c_\alpha \neq 0\}$.  A homogeneous presentation $\pi: \k[\bx] \to R$ then determines an associated \emph{weight quasivaluation} on $R$ by the pushforward operation: $\v_u = \pi_*\bar{\v}_u$.  In particular for $f \in R$ we have $\v_u(f) = \max\{\bar{\v}_u(p(\bx)) \mid \pi(p(\bx)) = f\}$ (see \cite[Definition 3.1]{Kaveh-Manon-NOK}).   

The associated graded algebra $\gr_u(R)$ of $\v_u$ is presented by the initial ideal $\In_u(I) \subset \k[\bx]$ (\cite[Lemma 3.4]{Kaveh-Manon-NOK}), where $I = \ker(\pi)$.   In particular, $\gr_u(R)$ is presented by the images of the generators $\pi(x_i) = b_i \in R$, $1 \leq i \leq n$.  This implies that the set $\mathcal{B} = \{b_1, \ldots, b_n\} \subset R_1 \subset R$ is a \emph{Khovanskii basis} of $\v_u$ (\cite[Definition 1]{Kaveh-Manon-NOK}). 

\begin{definition}[Khovanskii Basis]
Let $\v: A \to \Gamma$ be a quasivaluation. A set $\B \subset A$ is a {\it Khovanskii basis} if the image of $\B$ in the associated graded $\gr_\v(A)$ forms a set of algebra generators.
\end{definition}

\noindent
By \cite[Proposition 3.7]{Kaveh-Manon-NOK}, any quasivaluation with Khovanskii basis $\mathcal{B}$ is of the form $\v_u$ for some $u \in \Gamma^n$. 

Now we let $\Gamma = \Q$. The behavior of the weight quasivaluations $\v_u$ is governed by the \emph{Gr\"obner fan} $\Sigma(I)$. To a total monomial ordering $\prec$ we associate a closed cone $\tau_\prec \in \Sigma(I)$.  This is the set of $u$ such that $\In_\prec(\In_u(I) = \In_\prec(I)$.  The initial ideal $\In_\prec(I)$ is a monomial ideal; we let $\B_\prec \subset R$ be the set of images of monomials not contained in $\In_\prec(I)$.  It is well-known that $\B_\prec$ forms a vector space basis of $R$.  For the basics of Gr\"obner bases, and a proof of the following see \cite{GBCP} and \cite[Proposition 3.3]{Kaveh-Manon-NOK}.  

\begin{proposition}\label{prop-standard-properties}
Let $I \subset \k[x_1, \ldots, x_n]$ be a homogeneous ideal with Gr\"obner fan $\Sigma(I)$, then:
\begin{enumerate} 
	\item  For a monomial order $\prec$ and $u \in \tau_\prec$, the set $\B_\prec \subset R$ is an adapted basis of $\v_u$.
	\item   If $u, w \in \tau_\prec$ then $\v_u \add \v_w = \v_{u + w}$.
	\item   If $u \in \Trop(I) \subset|\Sigma(I)|$ then $\v_u(x_i) = u_i$.
	\item   For all $u \in \Q^n$, $\v_u$ has kernel $\{0\}$.  
\end{enumerate}
\end{proposition}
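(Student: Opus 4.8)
The plan is to reduce parts (2), (3), (4) to part (1), and to obtain (1) from a single structural fact about the interaction between the weight function $\bar{\v}_u$ on $\k[\bx]$ and the reduced Gr\"obner basis $G$ of $I$ with respect to $\prec$. Fix $u \in \tau_\prec$. The key point is: \emph{for every $g \in G$, the $\prec$-leading monomial of $g$ is one of the monomials of $g$ of minimal $u$-weight}, i.e. it occurs in $\In_u(g)$. I would argue by contradiction. Since $g \in I$, we have $\In_u(g) \in \In_u(I)$, so the monomial $\bx^\gamma := \In_\prec(\In_u(g))$ lies in $\In_\prec(\In_u(I)) = \In_\prec(I)$, the defining property of $\tau_\prec$. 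Now $\bx^\gamma$ is a monomial of $g$; if it were not the $\prec$-leading monomial of $g$ it would be a trailing monomial of $g$, and for a reduced Gr\"obner basis every trailing monomial is a standard monomial, i.e. not in $\In_\prec(I)$ --- contradicting $\bx^\gamma \in \In_\prec(I)$.

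Next I would use this to show that a single reduction step of the division algorithm modulo $G$ does not decrease $\bar{\v}_u$: cancelling the $\prec$-leading term $c\bx^\delta$ of a polynomial $p$ by subtracting $\tfrac{c}{\mathrm{lc}(g)}\,\bx^\epsilon g$, where $\bx^\delta = \bx^\epsilon\cdot(\text{leading monomial of }g)$, produces a polynomial of $\bar{\v}_u$-value at least $\min\{\bar{\v}_u(p),\,\langle u,\epsilon\rangle+\bar{\v}_u(g)\}$, and by the key point $\langle u,\epsilon\rangle+\bar{\v}_u(g) = \langle u,\delta\rangle \ge \bar{\v}_u(p)$. Iterating, $\bar{\v}_u(\mathrm{NF}(p)) \ge \bar{\v}_u(p)$ for all $p$, where $\mathrm{NF}$ is the normal form modulo $G$. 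Since $\mathrm{NF}$ is $\k$-linear, vanishes on $I$, and satisfies $p - \mathrm{NF}(p) \in I$, combining these facts with the pushforward formula $\v_u(\pi(p)) = \max\{\bar{\v}_u(q) : \pi(q)=\pi(p)\}$ yields $\v_u(\pi(p)) = \bar{\v}_u(\mathrm{NF}(p))$ for all $p$; in particular $\v_u(\pi(\bx^\alpha)) = \langle u,\alpha\rangle$ for every standard monomial $\bx^\alpha$. Part (1) now follows: $F_r(\v_u)$ is spanned by those $\pi(\bx^\alpha)$ with $\bx^\alpha$ standard and $\langle u,\alpha\rangle \ge r$, and since $\v_u(\pi(\bx^\alpha)) = \langle u,\alpha\rangle$ this spanning set is exactly $\B_\prec \cap F_r(\v_u)$; being a linearly independent spanning set, it is a basis of $F_r(\v_u)$, so $\B_\prec$ is adapted to $\v_u$.

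For (2): $\tau_\prec$ is a closed convex cone (a cell of the Gr\"obner fan $\Sigma(I)$), so $u+w \in \tau_\prec$, and by (1) the filtrations of $\v_u$, $\v_w$, $\v_{u+w}$ share the common adapted basis $\B_\prec$. On a standard monomial, $\v_{u+w}(\pi(\bx^\alpha)) = \langle u+w,\alpha\rangle = \v_u(\pi(\bx^\alpha)) + \v_w(\pi(\bx^\alpha))$, which by the final assertion of Proposition \ref{prop-sum-properties} equals $(\v_u\add\v_w)(\pi(\bx^\alpha))$; since a discrete quasivaluation is determined by its values on an adapted basis (if $f = \sum_b c_b b$ with $b \in \B_\prec$ then $\v(f) = \min\{\v(b) : c_b \ne 0\}$), we conclude $\v_{u+w} = \v_u\add\v_w$. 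For (3): if $u \in \Trop(I)$ then $x_i \notin I$ (otherwise $x_i = \In_u(x_i) \in \In_u(I)$), so $\pi(x_i) \ne 0$ and $\v_u(x_i) \ge \bar{\v}_u(x_i) = u_i$; if the inequality were strict, a lift $p$ of $\pi(x_i)$ with $\bar{\v}_u(p) > u_i$ would satisfy $p - x_i \in I$ and $\In_u(p - x_i) = -x_i$, putting the monomial $x_i$ in $\In_u(I)$, contrary to $u \in \Trop(I)$; hence $\v_u(x_i) = u_i$. For (4): since $\v_u$ is homogeneous we may compute $\v_u(f)$, for nonzero homogeneous $f$ of degree $d$, using homogeneous lifts of degree $d$; every monomial of such a lift has $u$-weight at most $d\max_i u_i$, so $\v_u(f) \le d\max_i u_i < \infty$, and by homogeneity $\v_u(f) < \infty$ for all $f \ne 0$, i.e. $\v_u$ has trivial kernel.

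The only genuine content is the key point of the first paragraph --- equivalently, the $\bar{\v}_u$-monotonicity of division by $G$ for $u \in \tau_\prec$; this is precisely where the compatibility between the monomial order $\prec$ and the weight $u$ is used, and it is the input supplied by standard Gr\"obner basis theory (see \cite{GBCP}). I do not expect any other difficulty: the remaining steps are formal consequences of Proposition \ref{prop-sum-properties} together with the definitions of $\Trop(I)$ and of the pushforward quasivaluation $\v_u$.
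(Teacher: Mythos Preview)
Your proof is correct and follows essentially the same route as the paper's, which simply cites \cite[Propositions 3.3, 3.7, 4.9]{Kaveh-Manon-NOK} for parts (1), (3), (2) and gives your argument for (4) via the formula $\v_u(\bx^\beta)=\langle u,\beta\rangle$ on standard monomials. Your ``key point'' (that the $\prec$-leading monomial of each reduced Gr\"obner basis element has minimal $u$-weight when $u\in\tau_\prec$) and the resulting $\bar\v_u$-monotonicity of division are exactly the content behind those citations; you have unpacked them into a self-contained argument, and for (2) you use the paper's own Proposition~\ref{prop-sum-properties} in place of the external reference, which is entirely appropriate.
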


\begin{proof}
The fact that $\mathbb{B}_\tau$ is an adapted basis is \cite[Proposition 3.3]{Kaveh-Manon-NOK}.  Also, \cite[Proposition 3.7]{Kaveh-Manon-NOK} implies that $\v_u(x_i) = u_i$ if $w \in \Trop(I)$. The quasivaluations $\v_u, \v_w$, and $\v_{u + w}$ all share the basis $\mathbb{B}_\prec$, so $\v_u \add \v_w = \v_{u + w}$ by \cite[Proposition 4.9]{Kaveh-Manon-NOK}.   For any monomial $\bx^\beta \in \mathbb{B}_\prec$ we have $\v_u(\bx^\beta) = \langle u, \beta \rangle$, and for any $f \in R$ there is some $\bx^\beta \in \mathbb{B}_\prec$ such that $\v_u(f) = \v_u(\bx^\beta)$, so $\v_u$ takes finite values. 
\end{proof}

\section{Proof of main theorems}

The proof of Theorem \ref{main-associatedgraded-quasival} involves a Bertini-type construction and a theorem of Rees, both of which we introduce now.   In all that follows $\k$ is algebraically closed. Let $X = \Proj(R) \subseteq \mathbb{P}(V^*)$, where $V$ is the space of linear forms in $\k[\bx]$, and $\dim(V) = n$.  First we recall the notions of \emph{regular sequence} and \emph{homogeneous system of parameters}, see \cite[Chapter I, Definitions 5.1 and 5.6]{Stanley}.

\begin{definition}[Homogeneous System of Parameters]
Let $R$ be a graded $\k$-algebra with $R_0 = \k$, and Krull dimension $n$. A sequence $y_1, \ldots, y_n \in R$ of homogeneous elements of positive degree is called a homogeneous system of parameters if the Krull dimension of the quotient ring $R/\langle y_1, \ldots, y_n\rangle$ is $0$.
\end{definition}

\begin{definition}[Regular Sequence]
A sequence $y_1, \ldots, y_d$ of elements of a $\k$-algebra $R$ is said to be a regular sequence if $\langle y_1, \ldots, y_d \rangle \neq R$ and the image of $y_i$ in $R/\langle y_1, \ldots, y_{i-1} \rangle$ is non-zero divisor for $1 \leq i \leq d$.
\end{definition}

\noindent
Positively graded Cohen-Macaulay $\k$-algebras are characterized by the fact that every homogeneous system of parameters is a regular sequence \cite[Chapter I, Theorem 5.9]{Stanley}.  Moreover, if $R$ is a graded Cohen-Macaulay $\k$-algebra, and $y_1, \ldots, y_d$ is a regular sequence, then $R/\langle y_1, \ldots, y_d\rangle$ is also Cohen-Macaulay \cite[Theorem 2.1.3 and Exercise 2.1.28]{BrunsHerzog}.

Next we require several variants of the Bertini theorem.   We say a scheme is pure dimensional if all if its irreducible components have the same dimension.  The following is a variant of the Kleiman-Bertini theorem \cite[Corollary 4]{Kleiman}, \cite[Appendix B, 9.2]{Fulton}. 

\begin{theorem}\label{thm-KleimanBertini}
Let $Y, Z \subset \mathbb{P}(V^*)$ be closed subschemes of pure dimensions $d_1$ and $d_2$ respectively, with $d_1 + d_2 \geq n$. Then there is a dense, open subset $U \subseteq \GL_n(\k)$ such that $g^{-1}Y \cap Z$ has pure dimension $d_1 + d_2 - n$ for all $g \in U$. 
\end{theorem}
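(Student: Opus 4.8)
The statement is the dimension‑only part of Kleiman's theorem on the transversality of a general translate, and the plan is to deduce it from that theorem together with the projective dimension theorem, after a reduction to irreducible components. First I would replace $Y$ and $Z$ by their reductions (harmless, since dimension is a topological invariant) and write $Y = Y_1 \cup \cdots \cup Y_p$ and $Z = Z_1 \cup \cdots \cup Z_q$ as unions of irreducible components; by the purity hypothesis $\dim Y_i = d_1$ and $\dim Z_j = d_2$ for every $i, j$. Since $g^{-1}Y \cap Z = \bigcup_{i,j} \bigl( g^{-1}Y_i \cap Z_j \bigr)$, and a finite union of closed subschemes each of pure dimension $e$ is again of pure dimension $e$ (each irreducible component of the union is a component of one of the pieces), it is enough to produce, for each pair $(i,j)$, a dense open $U_{ij} \subseteq \GL_n(\k)$ over which $g^{-1}Y_i \cap Z_j$ is of pure dimension $d_1 + d_2 - n$; then $U := \bigcap_{i,j} U_{ij}$ works, being a finite intersection of dense opens in the irreducible variety $\GL_n(\k)$. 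Whether we translate by $g$ or by $g^{-1}$ is immaterial, as $g \mapsto g^{-1}$ is an automorphism of $\GL_n(\k)$ taking dense opens to dense opens.

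With $Y_i, Z_j$ now integral, I would apply Kleiman's theorem \cite[Corollary 4]{Kleiman} (see also \cite[Appendix B, 9.2]{Fulton}) to the connected group $\GL_n(\k)$, which acts transitively on $\mathbb{P}(V^*)$: it yields a dense open $U_{ij} \subseteq \GL_n(\k)$ such that for all $g \in U_{ij}$ the scheme $g^{-1}Y_i \cap Z_j$ is either empty or of pure dimension $d_1 + d_2 - n$. Only the equidimensionality half of Kleiman's theorem is invoked here, not the transversality/smoothness half, so no hypothesis on the characteristic of $\k$ is needed, and having passed to integral $Y_i, Z_j$ we are exactly in the setting to which Kleiman's statement applies.

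It remains to rule out emptiness, which is precisely where the hypothesis $d_1 + d_2 \geq n$ is used. Each $g^{-1}Y_i$ and each $Z_j$ is a nonempty closed subvariety of $\mathbb{P}(V^*)$, of dimension $d_1$, respectively $d_2$, with $d_1 + d_2 \geq n$; by the projective dimension theorem, two closed subvarieties of a projective space whose dimensions sum to at least the dimension of the ambient space have nonempty intersection, so $g^{-1}Y_i \cap Z_j \neq \emptyset$ for \emph{every} $g \in \GL_n(\k)$, a fortiori for $g \in U_{ij}$. Together with the previous paragraph this gives that $g^{-1}Y_i \cap Z_j$ is of pure dimension $d_1 + d_2 - n$ for all $g \in U_{ij}$, and the reduction of the first paragraph then finishes the proof. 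The only real obstacle, once Kleiman's theorem is granted, is this non‑emptiness step: Kleiman's theorem by itself allows a general translate to meet $Z$ in the empty set when $Y$ is ``small'', and it is the projective‑dimension count under $d_1 + d_2 \geq n$ that excludes this and so upgrades ``empty or of the expected pure dimension'' to ``of the expected pure dimension''.
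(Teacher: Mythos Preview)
The paper does not give its own proof of this statement; it records it as a variant of the Kleiman--Bertini theorem and simply cites \cite[Corollary 4]{Kleiman} and \cite[Appendix B, 9.2]{Fulton}. Your argument is exactly the standard deduction one would make from those references: reduce to integral components, invoke the dimension half of Kleiman's transversality theorem for the transitive $\GL_n(\k)$-action on $\mathbb{P}(V^*)$ to get ``empty or pure of the expected dimension'', and then use the projective dimension theorem to exclude emptiness under the hypothesis $d_1 + d_2 \geq n$. This is correct, and your remark that only the equidimensionality part of Kleiman is used (so no characteristic assumption is needed) is on point.
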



Next we recall a technical result from \cite[Proposition 1.1]{Benoist}.  We state the result in a slightly different form to emphasize that the properties ``generically reduced" and ``irreducible" can be treated separately.  

\begin{theorem}\label{thm-Benoist}
Let $f: X \to S$ be a proper, flat, finite-type morphism of schemes, and suppose the fibers $X_s$ of $f$ are pure dimensional.  Then the set $S'$ of $s \in S$ such that $X_s$ is generically reduced is open in $S$.  Moreover, the set of points $S''$ of $s \in S$ such that $X_s$ is generically reduced and geometrically irreducible is open in $S$. 
\end{theorem}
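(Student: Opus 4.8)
The plan is to show first that the ``generically reduced'' locus $S'$ is open, using the standard principle that a locally constructible subset of a scheme is open exactly when it is stable under generization; the statement about $S''$ then follows at once, since $S''=S'\cap\{s : X_s\text{ is geometrically irreducible}\}$ and the latter set is already known to be open for $f$ proper (EGA IV, \S12). By the usual limit arguments one reduces immediately to the case that $S$ is Noetherian (the only case needed later, with $S$ of finite type over $\k$). So it remains to prove (a) $S'$ is locally constructible, and (b) $S'$ is stable under generization.

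For (a) I would argue by Noetherian induction on $S$: it suffices to produce, for each integral closed $T\subseteq S$ with generic point $\xi$, a dense open $V\subseteq T$ on which membership in $S'$ is constant. After base change to $T$, repeated applications of generic flatness --- to $\mathcal{O}_{X_T}$, to its nilradical, and to the associated reduced quotient --- render these data compatible with passage to fibers over a dense open $V\subseteq T$; one then reads off that $X_t$ is generically reduced for all $t\in V$ when $X_\xi$ is, while if $X_\xi$ is not generically reduced a witnessing nilpotent section spreads over a dense open set of fibers. Properness of $f$ is used to ensure the loci discarded in this process are closed.

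For (b), by the valuative criterion it is enough to treat $S=\Spec R$ with $R$ a discrete valuation ring, closed point $s$ and generic point $s'$, and to prove: if $X_s$ is generically reduced then so is $X_{s'}$. The key point is a computation with a single local ring. Let $\xi$ be a generic point of $X_s$ and $B=\mathcal{O}_{X,\xi}$. A uniformizer $\pi$ is a nonzerodivisor on $B$ by flatness, and $B/\pi B=\mathcal{O}_{X_s,\xi}$ is Artinian; hence $B$, a flat local extension of the $1$-dimensional Cohen-Macaulay ring $R$ with Artinian fiber, is Cohen-Macaulay of dimension $1$ \cite{BrunsHerzog}. Since $X_s$ is generically reduced, $B/\pi B$ is a reduced Artinian local ring, so it is a field; therefore $\mathfrak{m}_B=(\pi)$, and $B$ is itself a discrete valuation ring, in particular a domain. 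Now let $\eta$ be any generic point of $X$: it lies over $s'$ because $\mathcal{O}_X$ is $\pi$-torsion-free, and since $f$ is proper its closure dominates $\Spec R$, hence --- being flat over $R$ --- has special fiber of the common fiber dimension $d$, so that this special fiber contains some generic point $\xi$ of $X_s$. Localizing the domain $B=\mathcal{O}_{X,\xi}$ at the prime corresponding to $\eta$ shows $\mathcal{O}_{X,\eta}$ is a field. Thus every generic point of $X$ has reduced local ring, i.e.\ $X$ is generically reduced; and as $X_{s'}$ is a dense open subscheme of $X$, it too is generically reduced.

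The step I expect to be the main obstacle is the constructibility in (a): the nilradical does not commute with base change, so one cannot conclude directly that the fibers near $X_\xi$ are generically reduced from the single fiber $X_\xi$ being so. Making the spreading-out argument rigorous --- iterating generic flatness on the nilradical and its quotient, and controlling the discarded loci using properness --- is precisely the technical content of \cite[Proposition 1.1]{Benoist}. By contrast, the generization step (b) is the short, essentially self-contained local computation above, and the implication ``locally constructible $+$ stable under generization $\Rightarrow$ open'' is elementary.
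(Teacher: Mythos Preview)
The paper does not prove this theorem at all: it is introduced with the sentence ``Next we recall a technical result from \cite[Proposition 1.1]{Benoist},'' and is used as a black box in the proof of Proposition \ref{prop-Bertini}. There is therefore no ``paper's own proof'' to compare your proposal against.

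That said, your outline is a coherent reconstruction of how such a statement is proved, and you correctly identify that the constructibility step (a) is exactly the substance of \cite[Proposition 1.1]{Benoist}. Your generization argument (b) is sound: the passage from ``$B/\pi B$ is a reduced Artinian local ring'' to ``$B$ is a DVR'' is correct, and your use of properness plus pure-dimensionality of fibers to force the closure of a generic point of $X$ to hit a generic point of $X_s$ is the right idea (the flatness of $\overline{\{\eta\}}$ over the DVR $R$ follows because an integral scheme dominating a DVR is automatically $\pi$-torsion-free). The reduction of $S''$ to $S'$ via EGA IV, \S12 is also standard. So your proposal is essentially a proof sketch of Benoist's result rather than an alternative to anything in the present paper; for the purposes of this paper one would simply cite \cite{Benoist} and move on, as the authors do.
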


We also need the traditional hypersurface form of the Bertini theorem.  The following is a summary of \cite[3.4.10]{joins}, \cite[3.4.12]{joins}, \cite[3.4.13]{joins}, and \cite[3.4.15]{joins}.  See also \cite[Section 1.2]{Benoist}.

\begin{theorem}\label{thm-Bertini}
Let $X \subset \mathbb{P}(V^*)$ be a reduced subscheme with $\dim(X) \geq 1$.  There is a dense, open subset $W \subset \mathbb{A}^n(\k)$ such that for any $g \in W$ the $0$-locus $X_g$ of the form $\ell = \sum g_j x_j$ is reduced.  If $X$ is also irreducible and $\dim(X) > 1$, there is a dense open subset $W \subset \mathbb{A}^n(\k)$ such that the $0$-locus $X_g$ is reduced and irreducible. 
\end{theorem}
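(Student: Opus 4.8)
The plan is to realise the hyperplane sections $X_g$ as the fibers of one proper, flat family over a dense open subset of $\mathbb{A}^n(\k)$ and to deduce the statement from the behaviour of the \emph{generic} fiber of that family, spreading the conclusion to a dense open set by means of Theorem \ref{thm-Benoist} and standard openness results in flat families. Concretely, write $\mathbb{P}(V^*)=\mathbb{P}^{n-1}$ with homogeneous coordinates $x_1,\dots,x_n$, put $\ell_g=\sum_j g_jx_j$, and let $R$ be the homogeneous coordinate ring of $X$. Let $S\subseteq\mathbb{A}^n(\k)$ be the complement of the finitely many hyperplanes of $g$ for which $\ell_g$ lies in a minimal prime of $R$, together with the finitely many $g$ vanishing at a $0$-dimensional component of $X$; this $S$ is dense open. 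Let $\mathcal{X}\subseteq X\times S$ be the subscheme cut out by $\sum_j g_jx_j=0$, with projection $\phi\colon\mathcal{X}\to S$, so that $\phi^{-1}(g)=X_g$ scheme-theoretically. Since $X$ is projective, $\phi$ is proper of finite type, and — taking $X$ pure-dimensional, to which one reduces componentwise — Krull's principal ideal theorem shows every fiber $X_g$ with $g\in S$ is pure of dimension $\dim X-1$, as $\ell_g$ lies in no minimal prime.

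By generic flatness over the integral base $S$, $\phi$ is flat over a dense open $S_1\subseteq S$, so $\phi|_{S_1}$ is proper, flat, of finite type, with pure-dimensional fibers. A Noetherian scheme is reduced precisely when it is generically reduced and satisfies Serre's condition $(S_1)$; so Theorem \ref{thm-Benoist} together with the openness of the $(S_1)$-locus of fibers in such a family reduces the problem to showing that the generic fiber $X_\eta=X_{\k(S)}\cap\{\sum_j g_jx_j=0\}$ is reduced (respectively, reduced and geometrically irreducible, when $X$ is irreducible with $\dim X>1$): the property then holds on a dense open $W\subseteq S_1$, hence on a dense open of $\mathbb{A}^n(\k)$. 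Now set $K=\k(S)=\k(g_1,\dots,g_n)$ and $\ell=\sum_j g_jy_j\in(R\otimes_\k K)_1$, with $y_j$ the image of $x_j$. Since $\k$ is algebraically closed, $R$ reduced implies $R\otimes_\k K$ reduced, and $\ell$ is a non-zerodivisor on it because no minimal prime of $R$ contains all of $y_1,\dots,y_n$. That $(R\otimes_\k K)/(\ell)$ is generically reduced is immediate from Bertini's smoothness theorem for the very ample system $|\mathcal{O}(1)|$ — which separates tangent vectors, so the statement holds in every characteristic — giving $X^{\mathrm{sm}}\cap\{\ell=0\}$ smooth over $K$, while $X^{\mathrm{sing}}\cap\{\ell=0\}$ has strictly smaller dimension; and when $X$ is irreducible with $\dim X>1$, the geometric irreducibility of $X_\eta$ is the classical Bertini irreducibility theorem over $K$.

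The hard part, and the place where Theorem \ref{thm-Benoist} alone does not suffice, is the remaining claim that $(R\otimes_\k K)/(\ell)$ satisfies $(S_1)$ — equivalently that a general hyperplane section of a reduced scheme has no embedded components — since cutting a reduced but non-$(S_2)$ scheme by a non-zerodivisor can introduce embedded primes. The mechanism that controls it is a local depth count: at a putative embedded point $\mathfrak{q}$ of $X_\eta$ the element $\ell$ lies in $\mathfrak{q}$ and is a non-zerodivisor, so $\operatorname{depth}(R\otimes_\k K)_{\mathfrak{q}}=1+\operatorname{depth}\bigl((R\otimes_\k K)/(\ell)\bigr)_{\mathfrak{q}}=1$, whereas $\mathfrak{q}$, being embedded, properly contains a minimal prime of $(\ell)$ and hence has height $\ge 2$ in $R\otimes_\k K$; thus $\mathfrak{q}$ lies over the non-$(S_2)$ locus $Z\subseteq X$, a closed set of dimension $\le\dim X-2$. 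A dimension estimate for the incidence locus $\{(z,g):z\in Z,\ \ell_g(z)=0\}$, together with the openness of the $(S_1)$-locus in the family, then excludes embedded points for generic $g$; these ingredients are assembled in \cite[3.4.10, 3.4.12, 3.4.13, 3.4.15]{joins}. I note, finally, that for the Cohen--Macaulay varieties of interest in this paper the obstacle disappears entirely: $X_g$ is cut out of the Cohen--Macaulay scheme $X$ by the non-zerodivisor $\ell_g$, hence is itself Cohen--Macaulay and in particular satisfies $(S_1)$, so for such $X$ ``generically reduced'' already means ``reduced'', and the theorem follows from Theorem \ref{thm-Benoist} together with the smoothness and irreducibility inputs above.
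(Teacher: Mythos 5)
The paper does not actually prove Theorem~\ref{thm-Bertini}: the statement is presented as a summary of the classical Bertini package, cited from \cite[3.4.10--3.4.15]{joins} and \cite[Section 1.2]{Benoist}, and is then used as a black box in Lemma~\ref{lem-nonempty} and Proposition~\ref{prop-Bertini}. So there is no ``paper's own proof'' to compare against; your proposal is an attempt to reconstruct the proof that the cited sources supply.

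Your outline is sound and consistent with the machinery the paper actually invokes: you build the universal incidence family, restrict to a flat, pure-dimensional locus, use Theorem~\ref{thm-Benoist} (and openness of the $(S_1)$-locus in a proper flat family) to reduce to the generic fiber over $K=\k(g_1,\dots,g_n)$, then handle generic reducedness via characteristic-free Bertini smoothness for a very ample system and geometric irreducibility via classical Bertini irreducibility. The depth computation showing that an embedded prime $\mathfrak q$ of $(R\otimes_\k K)/(\ell)$ forces $\operatorname{depth}(R\otimes_\k K)_{\mathfrak q}=1$ while $\operatorname{ht}(\mathfrak q)\ge 2$, hence sits over the non-$(S_2)$ locus $Z$ of $X$ (with $\dim Z\le\dim X-2$), is the right observation and is the crux of the matter. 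Your closing remark that in the Cohen-Macaulay case the $(S_1)$ obstruction vanishes is exactly why Proposition~\ref{prop-Bertini} in the paper can route through Serre's criterion as it does; this is a genuinely clarifying observation.

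The one place where the argument, as written, does not close is the final step excluding embedded primes. You say that ``a dimension estimate for the incidence locus $\{(z,g):z\in Z,\ \ell_g(z)=0\}$ \dots excludes embedded points for generic $g$,'' but that estimate only shows $X_g\cap Z$ has codimension $\ge 2$ in $X_g$; when $\dim X\ge 3$ a general $X_g$ certainly meets $Z$, so avoidance is not available, and high codimension of the locus where $(S_2)$ fails does not by itself rule out embedded components there. The actual mechanism in \cite{joins} is sharper than a naive incidence count, and since you end up deferring to those same references for the decisive step, your proposal ultimately relies on the identical citation as the paper rather than replacing it. As a sketch of the structure of the classical proof and of how it dovetails with Theorem~\ref{thm-Benoist}, it is correct and faithful; as a self-contained argument it has this one genuine gap.
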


We use Theorem \ref{thm-Bertini} to prove the following lemma. 

\begin{lemma}\label{lem-nonempty}
Let $X \subset \mathbb{P}(V^*)$ be as above, and let $\F=\{f_1, \ldots, f_s\} \subset \k[\M_{k \times n}(\k)]$ be a collection of polynomials.  There is a $\k$-matrix $g \in \M_{k \times n}(\k)$ such that $f_r(g) \neq 0$ for all $f_r \in \F$, and:
\begin{enumerate}
\item If $X$ is reduced, then for every collection $A \subset [k]$, $|A| \leq d-1$, the $0$-locus $X_{A, g} \subset X$ of the forms $\ell_i = \sum g_{ij}x_j$ for $i \in A$ is reduced. 
\item If $X$ is reduced and irreducible, then for every collection $A \subset [k]$, $|A| < d-1$, the $0$-locus $X_{A, g} \subset X$ of the forms $\ell_i = \sum g_{ij}x_j$ for $i \in A$ is reduced and irreducible.
\end{enumerate}

\end{lemma}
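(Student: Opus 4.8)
The plan is to exhibit $g$ inside a finite intersection of dense open subsets of $\M_{k\times n}(\k)\cong\mathbb{A}^{kn}$. The statement implicitly requires each $f_r$ to be a nonzero polynomial, so the loci $\{g:f_r(g)\neq 0\}$ are dense open, and it therefore suffices to prove: if $X$ is reduced then for every $A\subseteq[k]$ with $|A|\leq d-1$ the set $U_A:=\{g\in\M_{k\times n}(\k):X_{A,g}\text{ is reduced}\}$ contains a dense open subset, and if $X$ is reduced and irreducible then for every $A$ with $|A|<d-1$ the set $U'_A:=\{g:X_{A,g}\text{ is reduced and irreducible}\}$ contains a dense open subset. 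Granting this, a point of the (finite, hence dense open, hence nonempty) intersection of the relevant $U_A$'s, the $U'_A$'s, and the $\{f_r\neq 0\}$'s does the job.

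To show $U_A$ (resp.\ $U'_A$) is dense I would induct on $m=|A|$, cutting by one linear form at a time; as only the rows of $g$ indexed by $A$ affect membership, this reduces to showing the set of tuples $(v_i)_{i\in A}\in(\mathbb{A}^n)^m$ for which $X\cap\bigcap_{i\in A}\{v_i=0\}$ is reduced (resp.\ reduced and irreducible) is dense. The base case $m=0$ is the hypothesis on $X$. For the step, write $A=B\sqcup\{j\}$; by the inductive hypothesis a dense set of tuples $(v_i)_{i\in B}$ makes $Y:=X\cap\bigcap_{i\in B}\{v_i=0\}$ reduced (resp.\ reduced and irreducible), and $Y$ is automatically nonempty (it is cut out in the projective $X$ by $|B|<\dim X$ linear forms), of dimension $\geq d-1-|B|$ by Krull's height theorem; since $|B|\leq d-2$ (resp.\ $|B|\leq d-3$) in the relevant range, $\dim Y\geq 1$ (resp.\ $>1$), so Theorem \ref{thm-Bertini} applies and provides, for each such $Y$, a dense open set of $v_j\in\mathbb{A}^n$ making $Y\cap\{v_j=0\}$ again reduced (resp.\ reduced and irreducible). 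A dense base with dense open fibers has dense total space, which closes the induction.

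Finally one must pass from ``dense'' to ``contains a dense open subset'', and this is the step I expect to be the main obstacle. Form the incidence scheme $\mathcal{X}_A\subseteq\M_{k\times n}(\k)\times X$ of pairs $(g,x)$ with $\sum_j g_{ij}x_j=0$ for all $i\in A$; it is of finite type over the Noetherian scheme $\M_{k\times n}(\k)$, with fiber $X_{A,g}$ over $g$. By the standard constructibility theorems for fiberwise geometric properties of a finite-type morphism (EGA IV, 9.9.2), the set of $g$ with geometrically reduced fiber, and the set with geometrically integral fiber, are constructible, and over the algebraically closed field $\k$ these are exactly $U_A$ and $U'_A$. Since a dense constructible subset of the irreducible scheme $\M_{k\times n}(\k)$ contains a dense open subset, we are done. (Alternatively, restrict to the dense open locus of $g$ where, by generic flatness and upper semicontinuity of fiber dimension, $\mathcal{X}_A\to\M_{k\times n}(\k)$ is flat, proper and has pure-dimensional fibers, and apply Theorem \ref{thm-Benoist}; the subtle point in this route is upgrading ``generically reduced'' to ``reduced'', i.e.\ excluding embedded components of the generic linear section, which requires in addition the openness of the $S_1$-locus in a flat family.)
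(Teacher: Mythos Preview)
Your argument is correct, but it takes a different route from the paper's proof. You aim to show that for each $A$ the good locus $U_A$ (resp.\ $U'_A$) contains a dense open set of $\M_{k\times n}(\k)$, proving density by induction on $|A|$ and then upgrading to openness via the EGA constructibility theorems; the desired $g$ is then any point of a finite intersection of dense opens. The paper instead avoids constructibility entirely and never asserts that the good locus is open: it inducts on $k$ and builds $g$ one row at a time. The key device is to rewrite each $f_r\in\k[\M_{k\times n}]$ as a polynomial in the last-row variables with coefficients in $\k[\M_{(k-1)\times n}]$, collect those coefficients into an enlarged family $\F'$, and apply the inductive hypothesis to obtain $g'\in\M_{(k-1)\times n}$ for which every $X_{B,g'}$ with $B\subseteq[k-1]$ is good \emph{and} no coefficient in $\F'$ vanishes. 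This guarantees that each $f_r$, specialized at $g'$, is a nonzero polynomial in the last-row variables, so the final row $g_k$ can be chosen in the intersection of finitely many dense opens of $\mathbb{A}^n$ coming from Theorem~\ref{thm-Bertini} (applied to each $X_{B,g'}$ with $B\subseteq[k-1]$) and the complements of these specialized hypersurfaces. Your approach is cleaner conceptually and proves a stronger statement (existence of a dense open of good $g$), at the cost of invoking EGA; the paper's approach is more hands-on and entirely elementary, which is presumably why the authors thread the family $\F$ through the statement rather than simply asserting openness. Your own caveat about the Theorem~\ref{thm-Benoist} alternative is well placed: that route only yields ``generically reduced'' fibers and would need an additional $S_1$-in-families argument, so the EGA constructibility path is the right one for your strategy.
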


\begin{proof}
We give the argument for $(2)$, the proof of $(1)$ is similar.  For $k = 1$ we use Theorem \ref{thm-Bertini} to find an open subset $W \subseteq \M_{k \times n}(\k)$ such that $X_{1, g}$ is reduced and irreducible for all $g \in W$.  Any point in the intersection of $W$ and the complements of the hypersurfaces defined by the $f_r \in \F$ then suffices.  Suppose the statement holds up to $k-1$. We write each $f_r \in \F$ as a polynomial in $n$ variables with coefficients  in $\k[\M_{k-1 \times n}(\k)]$, and we let $\F'$ be the set of all such coefficients.   We use the $k-1$ iteration of the statement with $\F'$ to construct $g' \in \M_{k-1 \times n}(\k)$.  Let $\F'' \subset \k[x_1, \ldots, x_n]$ be the set of polynomials obtained from $\F$ by evaluating the coefficients $\F'$ at $g'$.   By construction, $(1)$ and $(2)$ are satisfied for any $A \subset [k]$ with $|A| \leq d-1$ which does not include the index $k$.  Let $A \subset [k]$ include $k$, suppose that $|A| \leq d-1$, and let $B = A \setminus \{k\}$.  The scheme $X_{B, g'}$ is reduced and irreducible, so by Theorem \ref{thm-Bertini}, we can find an open subset $W_A \subset \mathbb A^n(\k)$ such that for any $g_k \in W_A$, the $0$-locus $X_{A, g} \subset X_{B, g'}$ of the form $\ell_k = \sum g_{kj}x_j$  is reduced  if $|A| \leq d-1$, and irreducible if $|A| < d-1$.  Now, any point $g_k$ in the dense, open subset $W \subset \mathbb A^n(\k)$ obtained by taking the intersection of the $W_A$ with $k \in A$ with the complements of the hypersurfaces defined by the members of $\F''$,can be appended to $g'$ to form $g \in \M_{k \times n}(\k)$ which satisfies $(1), (2),$ and $(3)$ above. 
\end{proof}

\begin{proposition}\label{prop-Bertini}
Let $J \subset \k[\bx]$ be a homogeneous ideal such that the positively graded algebra $R = \k[\bx]/J$ is a $d$-dimensional Cohen-Macaulay algebra.  Then there is a dense, open subset $W \subseteq \GL_n(\k)$ such that for any $g \in W$, and any $A \subset [n]$ with $|A| \leq d -1$, the ideal $\I_A = \langle I, x_i \mid i \in A\rangle$ has height $d-|A|$, where $I = g^{-1}\circ J$. Any such set $\{x_i \mid i \in A\}$ defines a regular sequence in $R \cong \k[\bx]/I$.  Moreover, if $J$ is radical then $\I_A$ is radical, and if $J$ is prime then $\I_A$ is prime for all $|A| < d-1$. 
\end{proposition}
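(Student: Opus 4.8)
The plan is to prove the three assertions in turn: the statements about heights and regular sequences using only that $R$ is Cohen--Macaulay, and then the radical and prime statements on successively smaller dense open subsets, deriving openness from Benoist's theorem and nonemptiness from Lemma~\ref{lem-nonempty}.

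For the first part, identify $R$ with $\k[\bx]/I$, let $\ell_i\in R_1$ be the image of $x_i$, so that $\k[\bx]/\I_A\cong R/\langle\ell_i\mid i\in A\rangle$ and $X_{A,g}:=\Proj(\k[\bx]/\I_A)$ is the common zero locus in $X=\Proj(R)$ of the forms $\ell_i$, $i\in A$. As $g$ ranges over $\GL_n(\k)$ these are ``general'' linear forms, and over the infinite field $\k$ a general $d$-tuple of linear forms in the $d$-dimensional graded algebra $R$ (which is generated in degree one) is a homogeneous system of parameters. The tuples that fail this form a proper Zariski-closed subset of $R_1^d$ (the image, under a projection with projective fibers, of the locus of tuples having a common zero in $X$); pulling this back along $g\mapsto(\ell_1,\dots,\ell_n)$ and intersecting over the finitely many $d$-subsets of $[n]$ gives a dense open $W_0\subseteq\GL_n(\k)$ over which every $d$-subset of $\{\ell_1,\dots,\ell_n\}$ is a homogeneous system of parameters. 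For $g\in W_0$ and $|A|\le d-1$, the set $\{\ell_i\mid i\in A\}$ extends to such a system, hence (as $R$ is Cohen--Macaulay) is a regular sequence, so $\k[\bx]/\I_A$ has dimension $d-|A|$ and is again Cohen--Macaulay. This establishes the height and regular-sequence claims on $W=W_0$.

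For the radical and prime claims, shrink $W_0$ (retaining the properties just proved) by generic flatness so that for each $A$ with $|A|\le d-1$ the family $\X_A\to W_0$ with fiber $X_{A,g}$ at $g$ is proper and flat; by the first part each fiber is Cohen--Macaulay, hence equidimensional, of dimension $d-1-|A|$. If $J$ is radical then $X$ is reduced, and Benoist's theorem (Theorem~\ref{thm-Benoist}) gives, for each such $A$, an open $V_A\subseteq W_0$ over which $X_{A,g}$ is generically reduced; if in addition $J$ is prime then $X$ is integral, and Theorem~\ref{thm-Benoist} gives, for each $A$ with $|A|<d-1$, an open $V_A'$ over which $X_{A,g}$ is generically reduced and irreducible (geometric irreducibility coinciding with irreducibility since $\k$ is algebraically closed). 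Set $W:=W_0\cap\bigcap_{|A|\le d-1}V_A$ when $J$ is radical, intersected further with $\bigcap_{|A|<d-1}V_A'$ when $J$ is prime. As $\GL_n(\k)$ is irreducible, $W$ is dense once it is nonempty, and Lemma~\ref{lem-nonempty} provides a point of $W$: apply it to $X$ with the finite family $\F$ chosen to contain $\det$ and a polynomial vanishing on the proper closed subset $\M_{n\times n}(\k)\setminus W_0$, so that the resulting matrix lies in $W_0$ and makes each $X_{A,g}$ reduced for $|A|\le d-1$, and reduced and irreducible for $|A|<d-1$.

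Finally, the fiberwise conclusions must be promoted to statements about the ideals $\I_A$, and this is where the Cohen--Macaulayness from the first part is essential. For $|A|\le d-1$, $\k[\bx]/\I_A$ is Cohen--Macaulay of dimension $\ge 1$; in particular it satisfies Serre's condition $(S_1)$ and its irrelevant ideal is not a minimal prime, so its minimal primes are the generic points of $X_{A,g}$ and ``$\k[\bx]/\I_A$ reduced'' is equivalent to ``$X_{A,g}$ generically reduced.'' Hence $\I_A$ is radical on $W$. When $X_{A,g}$ is moreover irreducible, $\k[\bx]/\I_A$ is reduced with a unique minimal prime, hence a domain, so $\I_A$ is prime. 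I expect this last passage between the scheme $X_{A,g}$ and the ring $\k[\bx]/\I_A$ --- that is, controlling behaviour at the irrelevant ideal, which the Cohen--Macaulay hypothesis handles via $(S_1)$ --- to be the main point requiring care; a secondary technical point is checking that $\X_A\to W_0$ may be taken proper and flat with equidimensional fibers so that Theorem~\ref{thm-Benoist} applies.
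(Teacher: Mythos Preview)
Your proposal is correct and follows essentially the same route as the paper: Benoist's theorem (Theorem~\ref{thm-Benoist}) for openness of the good locus, Lemma~\ref{lem-nonempty} for nonemptiness, and the passage from ``$\Proj$ generically reduced/irreducible'' to ``$\Spec$ reduced/integral'' via Serre's $(S_1)$ condition coming from Cohen--Macaulayness. The only minor divergences are that the paper invokes Kleiman--Bertini (Theorem~\ref{thm-KleimanBertini}) for the height/pure-dimension statement where you give the more elementary homogeneous-system-of-parameters argument, and that the paper explicitly constructs the incidence scheme $\X_A \subset X \times_\k \M_{n\times n}(\k)$ and checks it is locally a product over $X$ (thereby supplying flatness and identifying the fibers with $\Proj(\k[\bx]/\I_A)$), whereas you invoke generic flatness --- a point you already flag as the secondary technical detail to verify.
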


\begin{proof}
The ring $R$ is positively graded and Cohen-Macaulay with $R_0 = \k$, so the varieties $\Spec(R)$ and $\Proj(R)$ are Cohen-Macaulay, and pure dimensional.  For $A \subseteq [n]$, let $V_A^* \subset V^*$ be the $0$-locus of $\{x_i \mid i \in A\}$.  By \ref{thm-KleimanBertini} there is a dense, open subset $W'_A \subseteq \GL_n(\k)$ such that $g^{-1}\circ X \cap \mathbb{P}(V_A^*)$ has pure dimension $d + |A^c| - n  = d- |A|$ for all $g \in W'_A$.   If we let $W' = \bigcap_{|A| \leq d-1} W'_A$ and $I = g^{-1}\circ J$ for $g \in W'$, then the image of any set $\{x_i \mid i \in A\}$ in $\k[\bx]/I$ is part of a system of parameters for $R \cong \k[\bx]/I$.  The algebra $R$ is Cohen-Macaulay so the image of any $\{x_i \mid i \in A\}$ in $\k[\bx]/I$ is a regular sequence.  It follows that $\Spec(\k[\bx]/\I_A)$ and $\Proj(\k[\bx]/\I_A)$ are Cohen-Macaulay \cite[Ex 2.1.28]{BrunsHerzog}.

Let $Z = X \times_\k \M_{n \times n}(\k)$, and let $\X_A \subset Z$ be the $0$-locus of the forms $F_j = \sum_{i = 1}^n x_i y_{ji}$ for $j \in A$, where the $y_{ji}$ are polynomial generators of the coordinate ring $\k[\M_{n \times n}(\k)]$. Let $\mathcal{L}$ be the restriction of $\O(1)$ on $\mathbb{P}(V^*)$ to $X$, then the $x_i \in V$ define sections $s_i \in H^0(X, \mathcal{L})$.  Consider the projection $p: \X_A \to X$, and let $U \subset X$ be an affine open subset with coordinate ring $A = \k[U]$.  Without loss of generality we may arrange that $\mathcal{L}\!\!\mid_U = s_1\mathcal{O}_U$ with $s_i = f_is_1$, so that $p^{-1}(U) \subset \X_A$ is the $0$-locus of the equations $y_{j1} = \sum_{i > 1} f_iy_{ji}$ in $U \times_\k \M_{n\times n}(\k)$.  It follows that $p^{-1}(U) \cong U \times_\k \M_{n \times n-1}(\k)$.  As a consequence, we see that if $X$ is reduced or irreducible, $\X_A$ is as well.  Moreover, if $g \in W' \subset \M_{n \times n}(\k)$ is as above, then the fiber $X_{A,g}= p^{-1}(g)$ under the projection $p: \X_A \to \M_{n \times n}(\k)$ is seen to be isomorphic to  $\Proj(\k[\bx]/\I_A)$.  We let $\X_{A, W'}$ be the base-change to $W' \subset \M_{n \times n}(\k)$.  By shrinking $W'$ if necessary, we can ensure that $p: \X_{A, W'} \to W'$ is flat, so $p$ satisfies the criteria of Theorem \ref{thm-Benoist}. Hence, we see that the set of points $W_A \subset W'$ where $X_{A, g}$ is generically reduced (resp. generically reduced and irreducible) is open in $W'$.  Let $W = \bigcap_{|A| \leq d-1} W_A$.  

Now we suppose that $J$ is radical (respectively, prime).  We show that $W$ is non-empty by applying Lemma \ref{lem-nonempty} with $\F = \{f\}$, where $f: \M_{n \times n}(\k) \to \k$ is a polynomial such that $\M_{n \times n }(\k) \setminus V(f) \subseteq W'$.  Note that for every $g \in W$ and $A$ with $|A| \leq d-1$, the corresponding schemes $X_1 = \Proj(\k[\bx]/\I_A)$ and $X_2 = \Spec(\k[\bx]/\I_A)$ are Cohen-Macaulay.  Since $X_1$ is generically reduced (respectively irreducible if $J$ is prime and $|A|< d-1$), $X_2$ is as well.  This implies that $X_2$ satisfies Serre's condition $R_0$.  Moreover, as $X_2$ is Cohen-Macaulay, it satisfies Serre's condition $S_m$ for all $m \geq 0$, so it is $S_1$ and $R_0$ (for definitions of conditions $S_t$ and $R_t$ see for example \cite[Section 1.3, p. 30 and Section 1.4, p. 38]{joins}).  This is equivalent to being reduced. 
\end{proof}

Next we use the following result, due originally to Rees (see \cite[Theorem 2.1]{Rees}).

\begin{proposition}\label{prop-Rees}
Let $\bar{y} = \{y_1, \ldots, y_k\}$ be a regular sequence in an algebra $R$, and consider the $\J$-adic filtration of $R$, where $\J = \langle y_1, \ldots, y_k \rangle$, with associated graded algebra $\gr_\J(R)$. Then we have $\gr_\J(R) \cong (R/\J)[t_1, \ldots, t_k]$.  The isomorphism is defined by sending each $t_i \in (R/\J)[t_1, \ldots, t_k]$ to the image of $y_i$ in $\J/\J^2 \subset \gr_\J(R)$. Moreover, $R/\J$, in the righthand side, is identified with $R/\J$ as the zeroth degree part of $\gr_\J(R)$ in the lefthand side.
\end{proposition}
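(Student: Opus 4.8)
\emph{The plan.} I would realize the stated map as the obvious surjection and then reduce everything to a freeness statement, which I would prove by peeling off one element of the regular sequence at a time.

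\emph{Step 1: the canonical surjection.} Since $\gr_\J(R)$ is generated as an $R/\J$-algebra by its degree-one piece $\J/\J^2$, and the latter is spanned over $R/\J$ by the images $\bar y_1, \ldots, \bar y_k$, the assignment $t_i \mapsto \bar y_i$ gives a surjective graded $R/\J$-algebra homomorphism $\varphi\colon (R/\J)[t_1, \ldots, t_k] \to \gr_\J(R)$. So everything reduces to injectivity of $\varphi$, i.e. to: for every $n$ the $R/\J$-module $\J^n/\J^{n+1}$ is \emph{free} with basis the images of the degree-$n$ monomials in $y_1, \ldots, y_k$.

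\emph{Step 2: the induction.} Induct on $k$. For $k = 1$, $y_1$ is a non-zerodivisor and $\J^n = y_1^n R$, and $\bar r \mapsto \overline{r y_1^n}$ is an isomorphism $R/\langle y_1\rangle \to \langle y_1^n\rangle/\langle y_1^{n+1}\rangle$: surjectivity is clear, and if $r y_1^n = s y_1^{n+1}$ then $(r - s y_1) y_1^n = 0$, so $r \in \langle y_1\rangle$ because $y_1^n$ is a non-zerodivisor. For $k > 1$, set $\J' = \langle y_2, \ldots, y_k\rangle$, $\bar R = R/y_1 R$, and $\bar\J = \J\bar R = \langle \bar y_2, \ldots, \bar y_k\rangle$; note $\bar R/\bar\J = R/\J$. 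I would prove freeness of $\J^n/\J^{n+1}$ by a secondary induction on $n$ (the case $n = 0$ being the identity $R/\J \to R/\J$) using the short exact sequence of $R/\J$-modules
\[
0 \longrightarrow \J^{n-1}/\J^n \xrightarrow{\;\cdot\, y_1\;} \J^n/\J^{n+1} \longrightarrow \bar\J^n/\bar\J^{n+1} \longrightarrow 0 .
\]
Granting exactness, the outer induction hypothesis (the $(k-1)$-element case applied to the regular sequence $\bar y_2, \ldots, \bar y_k$ in $\bar R$) makes the right-hand term free over $R/\J$ on the degree-$n$ monomials in $y_2, \ldots, y_k$, while the inner hypothesis makes $\J^{n-1}/\J^n$ free over $R/\J$ on all degree-$(n-1)$ monomials, hence the left-hand term free on the degree-$n$ monomials divisible by $y_1$. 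These two monomial sets partition all degree-$n$ monomials in $y_1, \ldots, y_k$, so comparing the displayed sequence with the evident split exact sequence of free $R/\J$-modules on monomials and applying the five lemma shows $\varphi_n$ is an isomorphism; tracking the maps also gives the asserted identification $t_i \leftrightarrow \bar y_i$.

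\emph{Step 3: the crux, and the expected obstacle.} The real content is exactness of the displayed sequence, which amounts to the colon identity $(\J^m : y_1) = \J^{m-1}$ for all $m \geq 1$. (Given it, $\cdot\, y_1$ is well-defined and injective since $(\J^{n+1} : y_1) = \J^n$ and $y_1$ is a non-zerodivisor on $R$, and the cokernel is $\J^n\bar R/\J^{n+1}\bar R = \bar\J^n/\bar\J^{n+1}$ because $y_1 R \cap \J^n = y_1(\J^n : y_1) = y_1\J^{n-1}$.) To prove the colon identity, expand $\J^m = \sum_{i=0}^m y_1^i (\J')^{m-i}$: if $y_1 c = \sum_i y_1^i e_i$ with $e_i \in (\J')^{m-i}$, reducing modulo $y_1 R$ gives $e_0 \in y_1 R \cap (\J')^m$; if $y_1$ is a non-zerodivisor on $R/(\J')^m$ this forces $e_0 = y_1 f_0$ with $f_0 \in (\J')^m \subseteq \J^{m-1}$, and cancelling $y_1$ yields $c = f_0 + \sum_{i \geq 1} y_1^{i-1} e_i \in \J^{m-1}$. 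So the one remaining point — which I expect to be the main obstacle — is that $y_1$ is a non-zerodivisor on $R/(\J')^m$ for all $m$. For this I would use that $y_2, \ldots, y_k, y_1$ is again a regular sequence (permutability of homogeneous regular sequences, valid in the graded Cohen--Macaulay setting of the paper), so $y_1$ is a non-zerodivisor on $R/\J'$; then the outer induction hypothesis identifies $\gr_{\J'}(R)$ with $(R/\J')[t_2, \ldots, t_k]$, whose graded pieces are free over $R/\J'$, so $y_1$ is a non-zerodivisor on $\gr_{\J'}(R)$; and a non-zerodivisor on the associated graded of the finite (hence separated) $\J'$-adic filtration on $R/(\J')^m$ is a non-zerodivisor on $R/(\J')^m$. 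An alternative route that packages this bookkeeping: the Koszul complex of the regular sequence resolves $R/\J$, computing $\mathrm{Tor}_1^R(R/\J, R/\J) \cong \J/\J^2 \cong (R/\J)^k$, and since an ideal generated by a regular sequence is of linear type one gets $\gr_\J(R) = \Sym_R(\J)\otimes_R R/\J = \Sym_{R/\J}(\J/\J^2) = (R/\J)[t_1, \ldots, t_k]$; this is also a standard fact that could simply be cited from \cite{BrunsHerzog}.
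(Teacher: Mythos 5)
The paper does not prove Proposition~\ref{prop-Rees} at all --- it simply states it and cites Rees \cite{Rees} --- so there is no in-paper argument to compare yours against; here is an assessment of your proof on its own terms. Your argument is correct and complete: reducing to freeness of each $\J^n/\J^{n+1}$ over $R/\J$ on the degree-$n$ monomials in the $y_i$, running a double induction on the length $k$ and the degree $n$, and hinging everything on the short exact sequence $0 \to \J^{n-1}/\J^n \to \J^n/\J^{n+1} \to \bar\J^n/\bar\J^{n+1} \to 0$ together with the colon identity $(\J^m:y_1)=\J^{m-1}$ is the standard ``peel off one element'' route, and your derivation of the colon identity by pushing $e_0$ into $y_1R\cap(\J')^m$ and cancelling $y_1$ is sound, as is the lift of ``$y_1$ is a non-zerodivisor'' from $\gr_{\J'}(R)$ to $R/(\J')^m$ via separatedness of the finite filtration. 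The one caveat --- which you flag yourself --- is that this route invokes permutability of regular sequences twice, to know both that $y_2,\ldots,y_k$ is regular in $R$ and that $y_1$ is regular on $R/\J'$. The proposition as stated places no hypotheses on $R$ under which permutability is automatic, but in the paper's actual use ($R$ a positively graded Noetherian $\k$-algebra, the $y_i$ homogeneous of positive degree) it does hold, so nothing is lost. Your alternative route, the Koszul resolution together with $\J$ being of linear type, giving $\gr_\J(R)=\Sym_{R/\J}(\J/\J^2)$ with $\J/\J^2\cong(R/\J)^k$, is unconditional and is the cleaner way to cover the statement in full generality; citing that from \cite{BrunsHerzog}, as the paper effectively does by citing \cite{Rees}, would also be perfectly acceptable.
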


As before let $R$ be a positively graded algebra. From now on we let $\bar{y} = \{y_1, \ldots, y_n\} \subset R$ be a regular sequence (as in Proposition \ref{prop-Bertini}), and we let $I \subset \k[\bx]$ be the corresponding homogeneous ideal.  We select a subset $A \subset [n]$ with $|A| \leq d-1$, and we let $J_A = \langle y_i \mid i \in A\rangle$.  We let $\ord_A: R \to \bar{\Z}$ be the quasivaluation obtained from the $J_A$-adic filtration of $R$, and $\gr_A(R)$ be the corresponding associated-graded algebra.  Finally, we let $\epsilon_A \in \Q^n$ be the $(0, 1)$-vector with a $1$ for each $i \in A$ and a $0$ for $j \in A^c$. 

\begin{proposition}\label{prop-value}
We have $\epsilon_A = (\ord_A(y_1), \ldots, \ord_A(y_n)) \in \W^n_d$.  In particular, $\epsilon_A \in \Trop(I)$, $\v_{\epsilon_A} = \ord_A$ and $\In_{\epsilon_A}(I) = I_A\k[\bx]$. 
\end{proposition}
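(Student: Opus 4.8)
\emph{Proof strategy.} The plan is to establish the four assertions in turn: that $\ord_A(y_k) = (\epsilon_A)_k$ for every $k$, that the resulting $0$--$1$ vector $\epsilon_A$ lies in $\W^n_d = \Trop(I)$, that $\v_{\epsilon_A} = \ord_A$, and that $\In_{\epsilon_A}(I) = I_A\k[\bx]$.

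First I would compute $\ord_A(y_k)$. For $i \in A$ the inequality $\ord_A(y_i) \geq 1$ is clear since $y_i \in J_A$, and Rees's theorem (Proposition \ref{prop-Rees}) applied to the regular sequence $\{y_i \mid i \in A\}$ identifies the class of $y_i$ in $J_A/J_A^2 \subset \gr_A(R) \cong (R/J_A)[t_i \mid i \in A]$ with the variable $t_i$; since $\dim R/J_A = d - |A| \geq 1$ by Proposition \ref{prop-Bertini}, we have $t_i \neq 0$, so $y_i \notin J_A^2$ and $\ord_A(y_i) = 1$. For $j \in A^c$ I need $y_j \notin J_A$. When $|A| \leq d - 2$ this drops out of Proposition \ref{prop-Bertini}: $\dim R/J_{A\cup\{j\}} = d - |A| - 1 < d - |A| = \dim R/J_A$, so $J_{A\cup\{j\}} \neq J_A$. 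When $|A| = d - 1$ I would use that a general linear system of parameters of the Cohen--Macaulay algebra $R$ is a regular sequence, so the $d$ elements $y_i$ ($i \in A$) together with $y_j$ form a regular sequence for generic $g$; hence $y_j$ is a nonzerodivisor modulo $J_A$, and $y_j \notin J_A$. This gives $\ord_A(y_j) = 0$, hence $(\ord_A(y_1), \dots, \ord_A(y_n)) = \epsilon_A$. Since $\min(\epsilon_A) = 0$ and, choosing any $B \supseteq A$ with $|B| = d-1$, we have $(\epsilon_A)_i = 0$ for all $i \in B^c \subseteq A^c$, it follows that $\epsilon_A \in \C_B \subseteq \W^n_d = \Trop(I)$ by \cite{RS1}.

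Next I would prove $\v_{\epsilon_A} = \ord_A$ by showing both inequalities directly from the definitions. For $\v_{\epsilon_A} \leq \ord_A$: for any monomial $\ord_A(\pi(\bx^\beta)) = \ord_A(\prod_k y_k^{\beta_k}) \geq \sum_k \beta_k\,\ord_A(y_k) \geq \langle\epsilon_A, \beta\rangle = \bar\v_{\epsilon_A}(\bx^\beta)$; minimizing over the support of an arbitrary $p \in \k[\bx]$ gives $\ord_A(\pi(p)) \geq \bar\v_{\epsilon_A}(p)$, and maximizing over lifts of $f$ gives $\ord_A(f) \geq \v_{\epsilon_A}(f)$. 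For $\ord_A \leq \v_{\epsilon_A}$: if $\ord_A(f) = r$ then $f \in J_A^r$, so $f = \sum_{|\gamma| = r,\, \supp\gamma \subseteq A} y^\gamma g_\gamma$ with $g_\gamma \in R$; lifting the $g_\gamma$ arbitrarily to $q_\gamma \in \k[\bx]$ and forming $p = \sum_\gamma \bx^\gamma q_\gamma$ yields a lift of $f$ every monomial of which is divisible by some $\bx^\gamma$ with $\supp\gamma \subseteq A$, $|\gamma| = r$, hence has $\bar\v_{\epsilon_A}$-weight $\geq r$; thus $\v_{\epsilon_A}(f) \geq r$. Equality of the two quasivaluations gives equality of their filtrations, hence $\gr_{\epsilon_A}(R) = \gr_A(R)$. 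For the last assertion I would combine the presentation $\k[\bx]/\In_{\epsilon_A}(I) \cong \gr_{\epsilon_A}(R)$ from \cite[Lemma 3.4]{Kaveh-Manon-NOK} (under which $x_k$ maps to the initial form of $y_k$) with the Rees isomorphism $\gr_A(R) \cong (R/J_A)[t_i \mid i \in A]$. Using the computation of $\ord_A(y_k)$ to locate these initial forms --- $y_i$ has initial form $t_i$ in degree $1$ for $i \in A$, while $y_j$ has initial form $\pi_A(x_j) \in R/J_A$ in degree $0$ for $j \in A^c$ --- this produces a surjection $\Phi\colon \k[\bx] \twoheadrightarrow (R/J_A)[t_i \mid i \in A]$ with $\ker\Phi = \In_{\epsilon_A}(I)$, given by $x_i \mapsto t_i$ and $x_j \mapsto \pi_A(x_j)$. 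Expanding a general polynomial as $\sum_\alpha p_\alpha(x_j \mid j \in A^c)\prod_{i\in A}x_i^{\alpha_i}$ and noting that $\Phi$ restricts to $\pi_A$ on $\k[x_j \mid j \in A^c]$ (kernel $I_A$), I read off $\ker\Phi = I_A\k[\bx]$, i.e.\ $\In_{\epsilon_A}(I) = I_A\k[\bx]$.

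The hard part will be the genericity input showing $y_j \notin J_A$ for $j \in A^c$ in the boundary case $|A| = d-1$, since Proposition \ref{prop-Bertini} as stated constrains only index sets of size $\leq d-1$; this requires either the classical fact quoted above about general linear systems of parameters in graded Cohen--Macaulay algebras, or equivalently enlarging the dense open $W$ of Proposition \ref{prop-Bertini} by running its Kleiman--Bertini argument also for index sets of size $d$ (legitimate since $d + (n-d) = n$), which shows $\Proj(R/J_{A\cup\{j\}}) = \emptyset$ while $\Proj(R/J_A) \neq \emptyset$. A minor point requiring care is that the final claim is an equality of ideals, not merely an isomorphism of quotient rings, which is why the initial forms of the $y_k$ must be pinned down exactly via Rees's theorem.
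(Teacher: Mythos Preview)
Your argument is correct. The main methodological difference from the paper is in proving $\v_{\epsilon_A} = \ord_A$: the paper observes that, by Proposition~\ref{prop-Rees}, the images of the $y_k$ generate $\gr_A(R)$, so $\bar{y}$ is a Khovanskii basis for $\ord_A$; then it invokes \cite[Proposition~3.7]{Kaveh-Manon-NOK}, which says a quasivaluation with a given Khovanskii basis is determined by its values on that basis, to conclude $\ord_A = \v_{\epsilon_A}$ from the computation $\ord_A(y_k) = (\epsilon_A)_k$. You instead give a direct, self-contained proof of both inequalities from the definitions of the $J_A$-adic filtration and the pushforward quasivaluation. Your route avoids citing the external structural result and makes the argument more elementary; the paper's route is shorter and highlights the Khovanskii-basis viewpoint that drives the rest of the paper. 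For $\In_{\epsilon_A}(I) = I_A\k[\bx]$ the two proofs are essentially the same: both combine the presentation $\k[\bx]/\In_{\epsilon_A}(I) \cong \gr_{\epsilon_A}(R)$ with Rees's description of $\gr_A(R)$, though you spell out the kernel computation in more detail. Finally, your extra care about $y_j \notin J_A$ when $|A| = d-1$ is warranted; the paper simply asserts $y_j \in R \setminus J_A$, implicitly relying (as you do) on the standard fact that a generic set of $d$ linear forms in a positively graded Cohen--Macaulay algebra is a regular sequence.
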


\begin{proof}
Observe that $y_i \in J_A \setminus J_A^2$ for $i \in A$ and $y_j \in R \setminus J_A$ for $j \in A^c$.  This and the definition of $\W^n_d$ prove the first claim. Proposition \ref{prop-Rees} implies that the set $\bar{y}$ is a Khovanskii basis of $\ord_A$, so by \cite[Proposition 3.7]{Kaveh-Manon-NOK}, $\ord_A = \v_{\epsilon_A}$ since both quasivaluations take the same values on $\bar{y}$.  Proposition \ref{prop-Rees} also shows that $\In_{\epsilon_A}(I) = I_A\k[\bx]$.  
\end{proof}

Now we fix $A \subset [n]$ with $|A| = d$, and consider the weight vectors $\epsilon_i = \epsilon_{\{i\}}$ for $i \in A$, that is, $\epsilon_i$ is the $(0, 1)$-vector with $1$ at the $i$-th position and $0$ everywhere else.  We show that these all live in a common cone of the Gr\"obner fan. 

\begin{lemma}\label{lem-samecone}
Let $A$ and $i \in A$ be as above, then $\In_{\epsilon_{A\setminus \{i\}}}(\In_{\epsilon_i}(I)) = \In_{\epsilon_A}(I)$. 
\end{lemma}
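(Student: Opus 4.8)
The plan is to show that the two successive initial operations on the left-hand side assemble into a single initial operation with respect to $\epsilon_A$, by exhibiting a monomial order $\prec$ whose Gröbner cone $\tau_\prec$ contains all of the weight vectors $\epsilon_i$ for $i \in A$ as well as their partial sums $\epsilon_B$ for $B \subseteq A$. The key structural input is Proposition \ref{prop-standard-properties}: if $\epsilon_i, \epsilon_{A \setminus \{i\}} \in \tau_\prec$ for a common monomial order $\prec$, then $\v_{\epsilon_i} \add \v_{\epsilon_{A \setminus \{i\}}} = \v_{\epsilon_i + \epsilon_{A \setminus \{i\}}} = \v_{\epsilon_A}$, and taking associated graded algebras (equivalently, initial ideals, via \cite[Lemma 3.4]{Kaveh-Manon-NOK}) should translate this additivity of quasivaluations into the claimed nested-initial identity $\In_{\epsilon_{A \setminus \{i\}}}(\In_{\epsilon_i}(I)) = \In_{\epsilon_A}(I)$.

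**Finding the common cone.** The first concrete step is to verify that $\epsilon_i \in \Trop(I)$ for each $i \in A$; this is already contained in Proposition \ref{prop-value}, since $|\{i\}| = 1 \le d-1$ (assuming $d \ge 2$; the case $d \le 1$ is degenerate and must be handled separately, but there $|A| = d$ forces $A$ essentially trivial). Next I would choose a generic perturbation: pick a linear functional $\lambda$ on $\Q^n$ in general position and a real number $\delta > 0$ small enough, and consider the monomial order $\prec$ refining the partial order given by the weight vector $\epsilon_A + \delta \lambda$ — or more robustly, take any monomial order $\prec$ such that $\In_\prec(\In_{\epsilon_A}(I)) = \In_\prec(I)$, i.e. a monomial order lying in the closure of the Gröbner cone of $\epsilon_A$. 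By the face structure of the Gröbner fan, since each $\epsilon_i$ and each partial sum $\epsilon_B$ for $B \subseteq A$ lies in the cone $\C_{\{i\}}$ (resp. $\C_B$) and these cones are faces of $\C_A$ — and by Proposition \ref{prop-value} each such vector lies in $\Trop(I)$, so $\v_{\epsilon_B}(x_j) = (\epsilon_B)_j$ — one can arrange that all these vectors lie in the closed cone $\tau_\prec$. The point is that $\epsilon_A$ is a positive sum of the $\epsilon_i$, and convexity of Gröbner cones together with the fact that the relevant vectors index faces of the same maximal cone $\C_A \subset \W^n_d$ should force them into a common Gröbner cone.

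**Assembling the identity.** Once $\epsilon_i, \epsilon_{A\setminus\{i\}} \in \tau_\prec$, apply Proposition \ref{prop-standard-properties}(1)--(2): $\B_\prec$ is a common adapted basis for $\v_{\epsilon_i}$, $\v_{\epsilon_{A \setminus \{i\}}}$, and $\v_{\epsilon_A}$, and $\v_{\epsilon_i} \add \v_{\epsilon_{A \setminus \{i\}}} = \v_{\epsilon_A}$. Now I would invoke the general fact (implicit in \cite[Section 3]{Kaveh-Manon-NOK}) that taking initial ideals is transitive along faces: for $u$ in the Gröbner cone of $\prec$ and $w$ a weight such that $\In_w(I) = \In_w(I)$ and $u, w, u+w$ share the cone, one has $\In_w(\In_u(I)) = \In_{u+w}(I)$ — this is the standard two-step Gröbner degeneration statement, and it is exactly the statement that $\gr_{\v_w}(\gr_{\v_u}(R)) = \gr_{\v_{u+w}}(R)$ when all three weight quasivaluations share an adapted basis. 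Applying this with $u = \epsilon_i$ and $w = \epsilon_{A \setminus \{i\}}$ gives the lemma.

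**Main obstacle.** The technical heart is justifying that $\epsilon_i$ and $\epsilon_{A \setminus \{i\}}$ genuinely lie in a \emph{common} closed Gröbner cone $\tau_\prec$ — the Gröbner fan is generally coarser than the normal fan of any single object, and a priori $\epsilon_i$ and $\epsilon_{A\setminus\{i\}}$ could straddle a wall. The clean way around this is to use the R\"omer--Schmitz description already cited in the introduction: for $I = g \circ J$ generic with $R$ Cohen-Macaulay, \cite[Corollary 4.7]{RS2} says the Gröbner fan structure on $\Trop(I)$ coincides with the fan $\W^n_d$ of cones $\C_A$. Since $\epsilon_i \in \CC_{\{i\}} \subset \C_A$ and $\epsilon_{A \setminus \{i\}} \in \CC_{A \setminus \{i\}} \subset \C_A$ and $\epsilon_A \in \CC_A$, and $\C_A$ is a single cone of this Gröbner fan refinement (up to the standard refinement making each cone a union of Gröbner cones), one picks $\prec$ in the relative interior of $\C_A$'s Gröbner cone and notes all three vectors lie in its closure. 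Then Proposition \ref{prop-standard-properties} applies verbatim. I expect the proof to be short once this cone membership is pinned down; the bookkeeping of which genericity open set of $\GL_n(\k)$ one has landed in (from Proposition \ref{prop-Bertini}) is routine.
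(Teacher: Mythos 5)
Your proposal has a genuine gap, and the approach is structurally backwards relative to the paper's argument.

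The difficulty you flag in your ``Main obstacle'' paragraph is real, and your proposed resolution does not close it. Recall that in Lemma \ref{lem-samecone} we have $|A| = d$, so $\C_A$ has dimension $d+1$ and is \emph{not} contained in $\Trop(I) = \W^n_d$ (whose maximal cones are the $\C_B$ with $|B| = d-1$, of dimension $d$). In particular $\epsilon_A \in \CC_A$ lies outside the tropical variety, and the line segment from $\epsilon_i$ to $\epsilon_{A\setminus\{i\}}$ leaves $\Trop(I)$ immediately (for $t \in (0,1)$ the point $t\epsilon_i + (1-t)\epsilon_{A\setminus\{i\}}$ has only the $n-d$ coordinates in $A^c$ at the minimum, not $n - d + 1$). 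The R\"omer--Schmitz result \cite[Corollary 4.7]{RS2} only describes the Gr\"obner fan structure \emph{on} $\Trop(I)$; it says nothing about whether $\C_A$ with $|A| = d$ sits inside a single Gr\"obner cone, which is what you would need. A priori there could be a wall of $\Sigma(I)$ cutting across $\C_A$ separating $\epsilon_i$ from $\epsilon_{A\setminus\{i\}}$, and your argument does not rule this out. Moreover, the two faces $\C_{\{i\}}$ and $\C_{A\setminus\{i\}}$ of $\W^n_d$ do not even share a maximal cone of $\W^n_d$ (since $i \notin A \setminus \{i\}$), so the fan structure on $\Trop(I)$ alone gives no way to connect them.

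There is also a logical inversion worth noting: the paper proves Lemma \ref{lem-samecone} first, by purely algebraic means, and then \emph{uses} it in the proof of Theorem \ref{main-associatedgraded-quasival} to conclude that the $\epsilon_i$ lie in a common Gr\"obner cone. Your proposal attempts to establish cone membership first and then read off the nested-initial identity, but cone membership is exactly the hard content that the lemma was designed to supply. The paper's actual proof avoids Gr\"obner-fan geometry entirely: by Proposition \ref{prop-value}, $\In_{\epsilon_i}(I) = I_i \k[\bx]$ presents $(R/\langle y_i\rangle)[t_i]$; since $R/\langle y_i\rangle$ is again Cohen--Macaulay, the images of $\{y_j \mid j \in A \setminus\{i\}\}$ form a regular sequence there, and a second application of Rees' theorem (Proposition \ref{prop-Rees}) computes $\gr_{\epsilon_{A\setminus\{i\}}}(\gr_{\{i\}}(R)) \cong (R/J_A)[t_i \mid i \in A]$, which matches $\gr_{\epsilon_A}(R)$. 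This two-step use of Rees plus preservation of Cohen--Macaulayness under quotienting by a regular sequence is the key idea, and it replaces the cone-membership argument you were reaching for.
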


\begin{proof}
By Proposition \ref{prop-value}, $\In_{\epsilon_i}(I) = I_{i}\k[x_1, \ldots, x_n]$, where $I_{i} \subset \k[x_1, \ldots, \hat{x_i}, \ldots, x_n]$ presents $R/\langle y_i \rangle$.  The algebra $R/\langle y_i \rangle$ is also Cohen-Macaulay, so the images of $\{y_j \mid j \in A \setminus \{i\}\}$ also form a regular sequence. It follows that $\gr_{\epsilon_{A \setminus \{i\}}}(\gr_{\{i\}}(R)) \cong (R/J_A)[t_i \mid i \in A]$, and so $\In_{\epsilon_{A\setminus \{i\}}}(\In_{\epsilon_i}(I)) = \In_{\epsilon_A}(I)$. 
\end{proof}

Now we prove Theorem \ref{main-associatedgraded-quasival} and Corollary \ref{cor-initialideal}. 

\begin{proof}[Proof of Theorem \ref{main-associatedgraded-quasival} and Corollary \ref{cor-initialideal}]
By Proposition \ref{prop-Bertini}, the $y_i$ form a regular sequence in $R$. We observe that $(\deg(y_1), \ldots, \deg(y_n)) = (1, \ldots, 1)$, and that $\In_{w(1, \ldots, 1)}(J) = J$ for any ideal we encounter, as everything is homogeneous.  In particular, $\deg = \v_{(1, \ldots, 1)}$.  For $w \in \CC_A$ we then get $(\min(w) \scale \deg) \add (\ADD_{i \in A} (w_i - \min(w))\scale \ord_i) =$ $\v_{\min(w)(1, \ldots, 1)} \add  (\ADD_{i \in A} \v_{(w_i- \min(w))\epsilon_i})$ $= \v_w$, where the latter equality is a consequence of Proposition \ref{prop-standard-properties}.   
By Lemma \ref{lem-samecone}, the $\epsilon_i$ for $i \in A$ all lie in the same cone of $\Sigma(I)$.  Moreover, for any $w_i > 0$ with $w =\sum_{i \in A} w_i \epsilon_i$, we have $\In_w(I) = \In_{\epsilon_A}(I)$, as $w$ and $\epsilon_A$ are in the relative interior of this cone. It follows that $\gr_w(R) \cong \gr_{\epsilon_A}(R) \cong R/J_A[t_i \mid i \in A]$.  Moreover, $\add$-ing with some multiple of degree does not change this calculation.  
\end{proof}

Now we use Propositions \ref{prop-Bertini} and \ref{prop-noker} to prove Theorem \ref{main-primecones} and Corollary \ref{cor-wellpoised}.

\begin{proof}[Proof of Theorem \ref{main-primecones} and Corollary \ref{cor-wellpoised}]
Let $w \in \CC_A \subset \W_d^n$, then $\gr_w(R) \cong \k[\bx]/\In_w(I) \cong \k[\bx]/\In_{\epsilon_A}(I)$ $\cong R/J_A[t_i \mid i \in A]$.  If $J$ is radical, then $I$ and $I_A$ are radical, so $R/J_A$ is reduced and $\gr_w(R)$ is reduced.  The same reasoning holds if $w \in \W_{d-1}^n$ and $I$ is prime.  Now if $\In_w(I)$ is prime (resp. radical), Proposition \ref{prop-noker} implies that $J$ is prime (resp. radical). 
\end{proof}

To prove Corollary \ref{cor-wellpoised} we show a slightly stronger result.  

\begin{proposition}
Let $R$, $J$ and $I$ be as above, then the following are equivalent:
\begin{enumerate}
\item $\CC_A$ is a prime cone for some $A \subset [n]$ with $|A| = d-1$.
\item $I$ is well-poised. 
\item $I$ is a linear ideal. 
\end{enumerate}
\end{proposition}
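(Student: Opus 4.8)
The plan is to prove the cycle $(3)\Rightarrow(2)\Rightarrow(1)\Rightarrow(3)$, with essentially all of the work in the last implication.

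\emph{The easy implications.} For $(3)\Rightarrow(2)$: if $I$ is generated by linear forms then the reduced row-echelon basis of the $\k$-span of those generators is a universal Gr\"obner basis, so for every $w$ the ideal $\In_w(I)$ is again generated by linear forms; hence $\k[\bx]/\In_w(I)$ is a polynomial ring, $\In_w(I)$ is prime, and $I$ is well-poised. (Equivalently, invoke that monomial-free linear ideals are well-poised, as recorded in the introduction and in \cite{Ilten-Manon}.) For $(2)\Rightarrow(1)$: when $|A|=d-1$ the cone $\C_A$ is a maximal cone of $\W^n_d=\Trop(I)$, and by Corollary \ref{cor-initialideal} the ideal $\In_w(I)=I_A\k[\bx]$ is the same for all $w\in\CC_A$; since $R$ is well-poised this common ideal is prime, so $\CC_A$ is a prime cone.

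\emph{The main implication $(1)\Rightarrow(3)$, first step.} Suppose $\CC_A$ is a prime cone with $|A|=d-1$ and fix $w\in\CC_A$. By Theorem \ref{main-associatedgraded-quasival}(1) we have $\gr_w(R)\cong (R/J_A)[t_i\mid i\in A]$, and since $\In_w(I)$ is prime this algebra is a domain, hence so is $B:=R/J_A$. Now $B$ is a standard graded $\k$-algebra with $B_0=\k$, and by Proposition \ref{prop-Bertini} the set $\{y_i\mid i\in A\}$ is a regular sequence in $R$, so $\dim B=d-(d-1)=1$. I claim any standard graded domain $B$ of Krull dimension $1$ with $B_0=\k=\bar\k$ is a polynomial ring in one variable: $Y:=\Proj(B)$ is integral of dimension $\dim B-1=0$ and of finite type over $\k$, hence $Y=\Spec(K)$ for a field $K$ finite over $\k$, so $K=\k$; thus the function field of $Y$, which is the degree-$0$ part of $\operatorname{Frac}(B)$, equals $\k$, so for $0\neq u\in B_1$ and any $v\in B_1$ we get $v/u\in\k$, i.e. $B_1=\k u$ and $B=\k[B_1]=\k[u]$ with $u$ transcendental over $\k$.

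\emph{Second step: pushing back up to $R$.} From $B\cong\k[u]$ the Hilbert series is $H_B(t)=\tfrac{1}{1-t}$. Since $R$ is Cohen-Macaulay and $\{y_i\mid i\in A\}$ is a regular sequence of $d-1$ degree-one elements, $H_B(t)=(1-t)^{d-1}H_R(t)$, whence $H_R(t)=\tfrac{1}{(1-t)^d}$; in particular $\dim_\k R_1=d$, so $\dim_\k I_1=n-d$. Let $L\subseteq\k[\bx]_1$ be the span of $I_1$; then $\langle L\rangle\subseteq I$, and $\k[\bx]/\langle L\rangle$ is a polynomial ring in $d$ variables with Hilbert series $\tfrac{1}{(1-t)^d}=H_R(t)$. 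The graded surjection $\k[\bx]/\langle L\rangle\twoheadrightarrow\k[\bx]/I=R$ thus has equal finite dimensions in each degree and is therefore an isomorphism, so $I=\langle L\rangle$ is a linear ideal, and $J=g\circ I$ is too. This also shows $R$ is isomorphic to a polynomial ring, giving the final sentence of Corollary \ref{cor-wellpoised}.

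\emph{Expected obstacle.} The crux is the first step of $(1)\Rightarrow(3)$: recognizing that a single prime cone of top dimension forces $R/J_A$ to be a one-variable polynomial ring. Once that rigidity is established the Hilbert-series bookkeeping is routine. One could instead argue through degrees — a domain $R/J_A$ forces $\deg\Proj(R)=1$, and a projective variety of degree $1$ is a linear subspace — but the argument above is self-contained given the results already in hand, using only Theorem \ref{main-associatedgraded-quasival}, Proposition \ref{prop-Bertini}, and the Hilbert series of a Cohen-Macaulay algebra modulo a linear regular sequence.
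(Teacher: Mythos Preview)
Your proof is correct and follows essentially the same route as the paper's own argument: show $(3)\Rightarrow(2)\Rightarrow(1)$ trivially, then for $(1)\Rightarrow(3)$ use Theorem~\ref{main-associatedgraded-quasival} to identify $R/J_A$ as a one-dimensional standard graded domain over $\bar{\k}$, conclude $R/J_A\cong\k[t]$, and compare Hilbert functions to force $I$ to be generated in degree~$1$. The only cosmetic difference is that the paper reads off the Hilbert function of $R$ from the isomorphism $\gr_w(R)\cong\k[t,t_i\mid i\in A]$ (same Hilbert function as $R$), while you instead use the standard identity $H_{R/J_A}(t)=(1-t)^{d-1}H_R(t)$ for a linear regular sequence; both lead to $H_R(t)=(1-t)^{-d}$ and the same conclusion.
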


\begin{proof}
If $I$ is a linear ideal, then any initial ideal of $I$ is also linear, and therefore prime.  This shows that $(3) \implies (2)$.  Clearly $(2) \implies (1)$, so we show that $(1) \implies (3)$.  Suppose that $\C_A$ were a prime cone for some $|A| = d-1$, then by Theorem \ref{main-associatedgraded-quasival}, $\gr_w(R) \cong R/J_A[t_i \mid i \in A]$ is a domain.  It must be the case that $R/J_A$ is a positively graded domain of dimension $1$ which is generated by its degree $1$ component, so $R/J_A \cong \k[t]$. As a consequence, the monomials in the generators $\{y_i \mid i \in A\} \cup \{t\}$ form a homogeneous $\k$-vector space basis of $R$.  In particular, the Hilbert function of $R$ is the same as the Hilbert function of a polynomial ring on $d$ variables.  Now let $I_1 \subseteq I$ be the ideal generated by the degree $1$ part of $I$. We see that the degree $1$ part of $\k[\bx]/I_1$ has dimension $d$, so $\k[\bx]/I_1$ is isomorphic to a polynomial ring on $d$ variables. This implies that the Hilbert functions of $R = \k[\bx]/I$ and $\k[\bx]/I_1$ agree, so  $I_1 = I$.
\end{proof}

\bibliographystyle{alpha}
\bibliography{Biblio}

\end{document}